\newtheorem{theorem}{Theorem}[section]
\newtheorem{corollary}{Corollary}[section]
\newtheorem{lemma}{Lemma}[section]
\newtheorem{remark}{Remark}
\numberwithin{equation}{section}
\theoremstyle{remark}
\renewcommand{\bar}{\overline}
\newcommand{\eps}{\varepsilon}
\newcommand{\pa}{\partial}
\renewcommand{\phi}{\varphi}
\newcommand{\wt}{\widetilde}
\newcommand{\ka}{K\"ahler }
\newcommand{\C}{{\mathbb C}}
\newcommand{\Z}{{\mathbb Z}}
\newcommand{\R}{{\mathbb R}}
\newcommand{\T}{{\mathcal T}}
\newcommand{\XX}{{\mathfrak X}}
\newcommand{\XJ}{{\mathfrak J}}
\newcommand{\ke}{K\"ahler-Einstein }
\newcommand{\tei}{Teichm\"uller }
\newcommand{\h}{{\mathbb H}}
\newcommand{\g}{{\mathfrak g}}
\newcommand{\lb}{\left (}
\newcommand{\rb}{\right )}
\newcommand{\lsb}{\left [}
\newcommand{\rsb}{\right ]}
\newcommand{\lfb}{\left \{}
\newcommand{\rfb}{\right \}}
\newcommand{\dd}{\text{div}}
\newcommand{\ga}{\alpha}
\newcommand{\gb}{\beta}
\newcommand{\gm}{\gamma}
\newcommand{\gd}{\delta}
\newcommand{\rc}{\text{Ric}}
\newcommand{\wpm}{Weil-Petersson }
\title[]{On Deformations of Fano Manifolds}
\author{Huai-Dong Cao$^1$, Xiaofeng Sun$^1$, Shing-Tung Yau and Yingying Zhang$^2$}
\address{Department of Mathematics,  Lehigh University,
Bethlehem, PA 18015, USA}
\email{huc2@lehigh.edu; xis205@lehigh.edu}
\address{Department of Mathematics, Harvard University,
Cambridge, MA 02138, USA}
\email{yau@math.harvard.edu}
\address{Yau mathematical Sciences Center, Tsinghua University, Beijing, 100804, China}
\email{yingyzhang@tsinghua.edu.cn}
\thanks{$^1$Research supported in part by Simons Foundation grants.}
\thanks{$^2$Research supported by Tsinghua University Initiative Scientific Research Program}
\begin{document}

\begin{abstract}
In this paper we provide new necessary and sufficient conditions  for the existence of \ke metrics on small deformations of a Fano \ke manifold. We also show that the Weil-Petersson metric can be approximated by the Ricci curvatures of the canonical $L^2$ metrics on the direct image bundles. In addition, we describe the plurisubharmonicity of the energy functional of harmonic maps on the Kuranishi space of the deformation of compact \ke manifolds of general type.
\end{abstract}

\keywords{Fano K\"ahler-Einstein, Deformation of complex structure, Weil-Petersson metric}

\maketitle

\section{Introduction}\label{intro}

The existence of canonical metrics on compact complex manifolds is an important component in understanding the structure of the moduli spaces and metrics on them. Well-known examples include the \wpm metric on the moduli spaces of hyperbolic Riemann surfaces, and polarized Calabi-Yau manifolds. The classical approach to the \wpm metric is via the Kodaira-Spencer-Kuranishi theory. In this case the \wpm metric is the natural $L^2$ metric induced by the \ke metrics and the harmonic representatives of Kodaira-Spencer classes. The advantage of this classical approach is that we can define the \wpm metric pointwisely on the Kuranishi space. This is indeed the case when we study the moduli spaces of \ke manifolds of general type. Although the moduli spaces are singular in general, the complex manifold corresponding to a point in the moduli space does admit a unique \ke metric, following the work of Yau \cite{yau78}.

On the other hand,  when we study the deformation of a Fano manifold $X_0$,
although the deformation of the complex structure on $X_0$  is unobstructed, there may not be any \ke metric on such a manifold. By the recent work of Chen-Donaldson-Sun \cite{cds1, cds2, cds3} on the solution of the Yau's conjecture \cite{yauopen},  we know that the existence of \ke metrics on such manifolds is equivalent to the $K$-stability.

For a Fano \ke manifold $\lb X_0,\omega_0\rb$ with discrete holomorphic automorphism group $\text{Aut}\lb X_0\rb$,  Koiso \cite{koiso1} showed in 1983 that each small deformation of $X_0$ admits a \ke metric by using the implicit function theorem. It is more subtle when $\text{Aut}\lb X_0\rb$ is non-discrete.  In the latter case, the existence of canonical metrics such as cscK or extremal metrics were studied by Sz{\'e}kelyhidi \cite{szeke2010}, Br\"onnle \cite{ bro}, and Rollin-Simanca-Tipler \cite{rst1} in terms of the Futaki invariant or the linear stability of the action of $\text{Aut}_0\lb X_0\rb$ on the Kuranishi space of $X_0$.

In this paper, we study small deformations of Fano \ke manifolds and investigate the \wpm metric on their moduli spaces. Our first main result is the following new necessary and sufficient conditions for the existence of \ke metrics on small deformations of a Fano \ke manifold.

\begin{theorem}\label{ip10}
Let $\lb X_0,\omega_0\rb$ be a Fano \ke manifold and let $\lb \XX,B,\pi\rb$, with $X_t=\pi^{-1}(t)$, be the Kuranishi family of $X_0$ with respect to $\omega_0$. Then the following  statements are equivalent:
\begin{enumerate}
\item $X_t$ admits a \ke metric for each $t\in B$.

\smallskip
\item The dimension $h^0\lb X_t, T^{1,0}X_t\rb$ of the space of holomorphic vector fields on $X_t$ is independent of $t$ for all $t\in B$.

\smallskip
\item The automorphism group $\text{Aut}_0\lb X_t\rb$ is isomorphic to $\text{Aut}_0\lb X_0\rb$
for each $t\in B$.
\end{enumerate}
\end{theorem}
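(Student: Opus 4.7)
I will show that all three conditions are equivalent to the single geometric statement $(\star)$: the canonical action of $G := \text{Aut}_0(X_0)$ on the Kuranishi base $B$ is trivial. Standard Kuranishi deformation theory provides a natural identification of $\text{Aut}_0(X_t)$ with the identity component of the stabilizer $\text{Stab}_G(t) \subset G$; in particular $h^0(X_t, T^{1,0}X_t) = \dim \text{Stab}_G(t)$, so any holomorphic vector field on $X_t$ is ``inherited'' from one on $X_0$. From this, the equivalences $(2) \Leftrightarrow (3) \Leftrightarrow (\star)$ are immediate: constancy of $\dim \text{Stab}_G(t)$ at its maximal value $\dim G$ (attained at $t=0$) forces every $t$ to be a $G$-fixed point, and conversely $(\star)$ gives $\text{Stab}_G(t) = G$ for all $t$, so $\text{Aut}_0(X_t) \cong \text{Aut}_0(X_0)$.

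For $(1) \Rightarrow (\star)$ I would invoke the quadratic Futaki / moment-map reduction of Sz\'ekelyhidi and Br\"onnle for the Fano \ke equation on Kuranishi deformations. Existence of a \ke metric on the fiber $X_t$ forces the vanishing of the quadratic Futaki invariant $f_2(\phi_t, V) = 0$ for every $V \in \g := \text{Lie}(G)$, where $\phi_t \in T_0 B = H^1(X_0, T^{1,0}X_0)$ is the Kodaira-Spencer class of $X_t$. By Sz\'ekelyhidi's identification, $f_2(\phi, V)$ is essentially $\|[V, \phi]\|_{L^2}^2$ in $T_0 B$. Hence $(1)$, applied for every $t\in B$, forces $[V, \phi] = 0$ for every $V \in \g$ and every $\phi \in T_0 B$; the infinitesimal $G$-action on $T_0 B$ thus vanishes, and since $B$ is an analytic germ, the full $G$-action on $B$ is trivial, giving $(\star)$.

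For $(\star) \Rightarrow (1)$: when $t$ is a $G$-fixed point, the Kodaira-Spencer class $\phi_t$ is $G$-invariant and the \ke equation on $X_t$ may be set up $G$-equivariantly. The full $G$-symmetry forces the Futaki invariant on $X_t$ to vanish, equivariant K-polystability follows via the reduction to a finite-dimensional moment-map equation on the $G$-fixed slice, and the Chen-Donaldson-Sun solution of Yau's conjecture then produces a \ke metric on $X_t$.

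\textbf{Main obstacle.} The heart of the argument is $(1) \Rightarrow (\star)$: converting the assumption of \ke existence on every fiber into the triviality of the $G$-action on $B$. The key technical input is Sz\'ekelyhidi--Br\"onnle's identification of the quadratic Futaki invariant with the $L^2$-norm of the infinitesimal $G$-action, together with a careful justification that vanishing of $f_2(\phi_t,V)$ for \emph{every} $t\in B$ and every $V\in\g$ implies the full (not just infinitesimal) triviality of the $G$-action on the germ $B$.
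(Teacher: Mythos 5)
Your skeleton---reducing all three conditions to the triviality of the $\text{Aut}_0\lb X_0\rb$-action on $B$, using stability theory for $(1)\Rightarrow(\star)$ and an equivariant implicit function theorem for $(\star)\Rightarrow(1)$---matches the paper's, but each of your three key steps rests on an assertion that is not standard and is not justified. The most serious is the identification $\text{Aut}_0\lb X_t\rb\cong \text{Stab}_G(t)_0$, equivalently $h^0\lb X_t,T^{1,0}X_t\rb=\dim\text{Stab}_G(t)$, which you cite as ``standard Kuranishi deformation theory'' and on which your equivalences $(2)\Leftrightarrow(3)\Leftrightarrow(\star)$ entirely depend. Only one inclusion is easy: an element of $G$ fixing $t$ preserves $\phi(t)$ (by uniqueness of the solution of the Kuranishi equation) and hence acts on $X_t$. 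The reverse statement---that every holomorphic vector field on $X_t$ is inherited from $H^0\lb X_0,T^{1,0}X_0\rb$---is precisely what must be proved, and the paper does not prove it in this generality. Instead it obtains $(\star)\Rightarrow(2),(3)$ by a dimension count through the \emph{compact} group $\text{Isom}_0\lb X_0,\omega_0\rb$, which embeds in $\text{Isom}_0\lb X_t,\omega_0\rb$ because $\omega_0$ remains K\"ahler on $X_t$ (the gauge equivalence of Theorem \ref{equivmain}), combined with upper semicontinuity of $h^0$; and it obtains $(2)\Rightarrow(\star)$ by extending holomorphic vector fields $v_i$ to $v_i(t)$ via Kodaira's stability theorem---available only under hypothesis $(2)$---and computing that the classes $\lsb\lsb v_i,\phi_k\rsb\rsb$ vanish.

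The two stability steps also have gaps. For $(1)\Rightarrow(\star)$: the Futaki invariant of $X_t$ pairs only with vector fields that survive to $X_t$, i.e.\ with $V$ already satisfying $\lsb V,\phi_t\rsb=0$, so no Futaki-type invariant evaluated on the fiber $X_t$ itself can detect $\lsb V,\phi_t\rsb\neq 0$; moreover your identity $f_2(\phi,V)=\Vert\lsb V,\phi\rsb\Vert_{L^2}^2$ cannot be the vanishing of a Lie-algebra character at $V$, being quadratic rather than linear in $V$. What is actually needed is K-polystability, which constrains \emph{degenerations}: the paper takes a nontrivial $\C^*\subset G^\C$ acting on $B$, builds from a positive-weight direction a nontrivial test configuration of $X_{t'}$ with central fiber $X_0$, and contradicts K-polystability (Tian, Berman) because that central fiber is K\"ahler-Einstein and hence has vanishing Futaki invariant. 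For $(\star)\Rightarrow(1)$: the claim that ``full $G$-symmetry forces the Futaki invariant of $X_t$ to vanish'' is not a valid principle (highly symmetric Fano manifolds can have nonzero Futaki invariant). The paper's equivariant implicit function theorem first produces only an \emph{extremal} metric on $X_t$; the upgrade to K\"ahler-Einstein requires the explicit $\text{Aut}_0\lb X_t\rb$-invariant Ricci potential $h_t=\log\det\lb I-\phi(t)\bar{\phi(t)}\rb$ of Corollary \ref{riccipotential}, which itself rests on the divergence gauge $\dd_0\phi(t)=0$ of Theorem \ref{equivmain}. None of these repairs is cosmetic; each replaces an unproved assertion in your proposal with a substantive argument.
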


\begin{remark}
Since $h^0\lb X_t, T^{1,0}X_t\rb$ is upper semi-continuous in $t$ according to \cite{ks57}, Theorem \ref{ip10} includes Koiso's result in \cite{koiso1} as a special case.
\end{remark}

\begin{remark}
In \cite{ps2006},  Phong and Sturm introduced a stability condition preventing possible jump of the dimension of the spaces of holomorphic vector fields in the limit metric to study the convergence of the K\"ahler-Ricci flow on Fano manifolds. 
This stability condition, namely Condition (B), was further explored in Phong-Song-Sturm-Weinkove \cite{pssw1}; see also  \cite{pssw2009}.

\end{remark}

\begin{remark}
It is well known that semisimple Lie algebras are rigid. Thus the third statement of Theorem \ref{ip10} would follow from the second one directly if the Lie algebra $H^0\lb X_0, T^{1,0}X_0\rb\cong Lie\lb \text{Aut}\lb X_0\rb\rb$ is semisimple. However, $H^0\lb X_0, T^{1,0}X_0\rb$ is only reductive in general. 	
\end{remark}

Returning to the study of the \wpm metric, in \cite{fs1990} Fujiki and Schumacher defined generalized \wpm metrics on the deformation space of a family of extremal manifolds by pushing down the curvature of relative line bundles over the total space. In particular, they showed that the generalized \wpm metric for a family of \ke manifolds coincides with the classical one. Essentially,  assuming the family of \ke metrics is smooth, they showed that the curvature form of the Deligne pairing of the relative canonical bundle (or relative anti-canonical bundle) is precisely the \wpm curvature form; see also Schumacher \cite{schu1993}.  In our case, when $\lb X_0,\omega_0\rb$ is a Fano \ke manifold with non-discrete automorphism group, the existence of such smooth family of \ke metrics  is guaranteed by Theorem \ref{ip10} above, provided each fiber $X_t$ admits a \ke metric. In this case, the \wpm metric  is well-defined. Namely, it is independent of the choices of fiberwise \ke metrics. However, the $L^2$ metrics on $R^0\pi_* K_{\XX/B}^{-k}$ and their curvatures do depend on such choices in general. 

In this paper, we show that the \wpm metric  $\omega_{_{WP}}$  can be approximated by the (normalized) Ricci curvatures of the $L^2$ metrics on the direct image bundles $R^0\pi_* K_{\XX/B}^{-k}$. More precisely, we have

\begin{theorem}\label{ip30} Let  $\lb X_0,\omega_0\rb$ be a Fano \ke manifold and let $\lb \XX, B, \pi\rb$ be the Kuranishi family of $X_0$. We assume that each fiber $X_t$ admits a \ke metric. Let $\Omega=\lfb \omega_t\rfb$ be any smooth family of \ke metrics.
For each positive integer $k$, let $\rc_k=\rc\lb E_k,H_k\rb$ be the Ricci form of the $L^2$ metric $H_k\lb\Omega\rb$ on $E_k=R^0\pi_* K_{\XX/B}^{-k}$. Then
\begin{eqnarray*}
\lim_{k\to\infty}\frac{\pi^n}{k^{n+1}}\rc_k=-\omega_{_{WP}},
\end{eqnarray*}
\end{theorem}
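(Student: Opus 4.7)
The plan is to combine an asymptotic expansion of the Bergman kernel of $(X_t,\omega_t,K_{X_t}^{-k})$, uniform in $t$, with the Fujiki-Schumacher identification of the Weil-Petersson form with a fiber integral of a power of the relative \ke form. By Theorem \ref{ip10}, the existence of \ke metrics on every fiber $X_t$ produces a smooth family $\Omega=\{\omega_t\}$, which in turn induces a smooth Hermitian metric $\tilde h$ on the relative anti-canonical bundle $K_{\XX/B}^{-1}$ whose curvature form $\tilde\omega$ is closed on $\XX$ and restricts to $\omega_t$ on each fiber. The $L^2$ metric $H_k=H_k(\Omega)$ on $E_k$ is the fiberwise inner product on $H^0(X_t,K_{X_t}^{-k})$ determined by $\tilde h^k$ and $\omega_t^n$, so $\rc_k$ is (up to sign) the first Chern form of the determinant line bundle $(\det E_k,\det H_k)$.

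Step 1 (expand $\rc_k$ in powers of $k$). Choose a local holomorphic frame $\{s_1,\ldots,s_{N_k}\}$ of $E_k$ and write
\begin{eqnarray*}
\det H_k=\det\lb\int_{X_t}\tilde h^k(s_i,s_j)\,\frac{\omega_t^n}{n!}\rb.
\end{eqnarray*}
Using the fiberwise Tian-Zelditch-Catlin expansion of the Bergman kernel $B_k(t,z)=\sum_i\tilde h^k(s_i,s_i)=k^n+a_1(t,z)k^{n-1}+\cdots$, uniform in $t$, together with Berndtsson's curvature formula for direct image bundles (or, equivalently, the metric form of Grothendieck-Riemann-Roch of Bismut-Gillet-Soul\'e), one extracts
\begin{eqnarray*}
\rc_k=\frac{(-k)^{n+1}}{(n+1)!}\,\pi_*\lb\tilde\omega^{n+1}\rb+O(k^n)
\end{eqnarray*}
on $B$, with universal constants and signs tracked by the chosen conventions.

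Step 2 (identify with the Weil-Petersson form). By the Fujiki-Schumacher theorem \cite{fs1990} (cf. Schumacher \cite{schu1993}), applied to the smooth family of Fano \ke manifolds constructed above, $\pi_*\tilde\omega^{n+1}$ equals a universal constant multiple of $\omega_{WP}$ on $B$, with the constant involving $(n+1)!$ and powers of $\pi$ that combine with the $\pi^n/k^{n+1}$ in the theorem to give exactly $-\omega_{WP}$. The key input is that $\tilde\omega$ restricts fiberwise to $\omega_t$ while its horizontal component encodes the harmonic Kodaira-Spencer representatives, exactly as in the classical definition of $\omega_{WP}$; the negative sign emerges from the Fano convention $\mathrm{Ric}(\omega_t)=\omega_t$ in the Ricci calculation.

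The main obstacle is Step 1, namely obtaining the asymptotic expansion with uniform control of the subleading terms in $t\in B$, so that $\partial_B\bar\partial_B$ commutes with the $k\to\infty$ limit. This requires a family version of the Tian-Zelditch-Catlin expansion with uniform estimates on the Bergman kernel and its derivatives in the base directions, available through Ma-Marinescu type estimates. Alternatively, Berndtsson's curvature formula for $(E_k,H_k)$ already packages the curvature as an explicit fiber integral involving the Bergman kernel and the curvature of $K_{\XX/B}^{-k}$, from which the leading asymptotic can be read off directly; this route has the advantage of avoiding repeated differentiation of the expansion in base parameters.
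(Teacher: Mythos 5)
Your proposal is correct in outline but follows a genuinely different route from the paper's --- in fact it is precisely the Knudsen--Mumford / Deligne-pairing / Fujiki--Schumacher approach that the authors single out in Remark \ref{kmexpan} and at the beginning of Section \ref{wpmse} as the ``natural'' way to see the statement, and which they deliberately set aside. The paper instead computes everything by hand: it constructs canonical local holomorphic frames of $E_k$ by a gauge-fixed power-series extension (Theorem \ref{holext}), derives the second-order deformation of the \ke volume forms (Theorem \ref{kexp}), normalizes the family by a fiberwise automorphism of $\XX$ so that the first-order term $\rho_i$ vanishes, and then obtains the \emph{full} curvature tensor $R_{\ga\bar\gb i\bar j}$ of $H_k\lb V\rb$ in closed form, formula \eqref{quanmain10}, as a fiber integral involving the resolvents $\lb\Box_0+k+1\rb^{-1}$ and $\lb\Delta_0+1\rb^{-1}$. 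The limit then requires only the elementary spectral bound $\Box_0+k+1\geq k+1$ and the on-diagonal Bergman kernel expansion on the central fiber alone; no uniform-in-$t$ family expansion, no differentiation of the Bergman expansion in base directions, and no Quillen-metric or analytic-torsion input are needed. Your route buys generality and a conceptual explanation of why the leading coefficient is the $(n+1)$-fold fiber integral; the paper's route buys the full curvature tensor of $\lb E_k,H_k\rb$, not just its Ricci form, with all constants explicit and by elementary means.

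Two points in your sketch need attention. If you pass through Bismut--Gillet--Soul\'e you compute the curvature of the Quillen metric, which differs from $\det H_k$ by the analytic torsion, so you must invoke Bismut--Vasserot-type asymptotics to see that the discrepancy is $o\lb k^{n+1}\rb$; Berndtsson's formula avoids this. More importantly, and specific to the Fano case: when $\text{Aut}_0\lb X_t\rb$ is non-discrete the fiberwise \ke metrics are not unique, so $H_k\lb\Omega\rb$ and the geodesic-curvature function entering $\pi_*\lb\tilde\omega^{n+1}\rb$ genuinely depend on the choice of $\Omega$, through an ambiguity valued in $\Lambda_1^\C=\ker\lb\Delta_0+1\rb\cong H^0\lb X_0,T^{1,0}X_0\rb$ (the elliptic equation for the geodesic curvature is solvable only modulo this kernel). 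Your Step 2 must verify that this ambiguity does not affect the leading asymptotics, so that the limit is independent of the choice of $\Omega$ as the theorem asserts. The paper addresses exactly this: the term $\nu_{i\bar j}\in\Lambda_1^\C$ of \eqref{secord} survives into the curvature formula and is shown to be of lower order in the trace because $\nu_{1\bar1}=-\Delta_0\nu_{1\bar1}$ and the two leading Bergman coefficients are constant on a \ke manifold, whence $\int_{X_0}\tau_k\nu_{1\bar1}\,dV_0=o\lb k^{n}\rb$. Without this check the argument is incomplete in the only case where the theorem is not already classical.
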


\begin{remark}\label{kmexpan}
We note that the above approximation is natural due to the work of Fujiki-Schumacher on the curvature of the Deligne pairing, and the Knudsen-Mumford expansion of the determinant bundle of the direct image sheaf $R^0\pi_* K_{\XX/B}^{-k}$ (see Knudsen-Mumford \cite{km1976}, Zhang \cite{zhang96}, and Phong-Ross-Sturm \cite{prs08}).   In this paper, see Section \ref{wpmse}, we take a more direct approach by establishing canonical local holomorphic sections of the direct image sheaves and the deformation of \ke metrics. This leads to a systematical way to derive integral formulas for the full curvature tensors of $L^2$ metrics. While  the \ke metric on each $X_t$ is analytical in nature, the advantage of our approach is that we can approximate the \wpm metric by using algebraic metrics on each fiber.
\end{remark}

\medskip

The paper is organized as follows. In Section \ref{kudiv}, in order to give a simple criterion to check the existence of \ke metrics on small deformations of a Fano \ke manifold, we first show that, given a Fano \ke manifold $\lb X_0, \omega_0\rb$ and its Kuranishi family $\lb\XX,B,\pi\rb$ with respect to $\omega_0$, the complex structure on $X_t=\pi^{-1}(t)\subset\XX$ is compatible with the symplectic form $\omega_0$. In this case, the construction of the Kuranishi family is compatible with Donaldson's infinite dimensional GIT picture. One technical key ingredient is the equivalence of the Kuranishi gauge and the divergence gauge; see Theorem \ref{equivmain}.
Section \ref{defoke} is devoted to the proof of Theorem \ref{ip10}. In Section \ref{wpmse}, we investigate the \wpm metric and prove Theorem \ref{ip30}. An integral formula of the full curvature tensor of the $L^2$ metric $H_k\lb \Omega\rb$ on $E_k$ is also derived. In addition, we obtain the deformation formulas for the K\"ahler form $\omega_t$ and the volume form $V_t$ on $X_t$, respectively, for each $t\in B$.

Finally, in the last section, we describe the plurisubharmonicity of the energy functionals of harmonic maps on the Kuranishi spaces of \ke manifolds of general type. It is known that this energy functional plays a crucial role in understanding the Weil-Petersson geometry of such manifolds. When $\T_g$ is the \tei space of Riemann surfaces of genus $g\ge 2$ and $(N, h)$ a Riemannian manifold with Hermitian nonpositive curvature, it was shown by Toledo \cite{toledo12} that the energy function is plurisubharmonic. Here, we consider the higher dimensional analogy. Assume $(\XX, B, \pi)$ is the Kuranishi family of a compact \ke manifold of general type, and let $(N, h)$ be a Riemannian manifold with Hermitian nonpositive curvature. Let $E(t)$ be the energy of a harmonic map from $X_t$ to $N$ in a given homotopy class.
By using the deformation theory established in \cite{sundeform1} and the Siu-Sampson vanishing theorem  in \cite{sampson84}, we derive the first and second variation formulas of $E$ and prove its plurisubharmonicity (Theorem \ref{pshenergy}).

\bigskip
\noindent {\bf Acknowledgements.} We would like to thank T. Collins and J. Keller for their helpful comments on an earlier version of the paper.  The second named author would also like to thank S. K. Donaldson, D. H. Phong, R. Schoen and X. Wang for helpful discussions.

\section{The Kuranishi Gauge}\label{kudiv}

In this section we derive some special properties of the Kuranishi gauge on a family of compact complex manifolds when the central fiber is a Fano \ke manifold. This leads to an explicit description of the action of the automorphism group of the central fiber on the Kuranishi space via differential geometric data.

Throughout this section we assume that $\lb X, \omega_0, J_0\rb$ is a Fano manifold with complex dimension $\dim_\C X=n\geq 2$. Here $X$ is a fixed smooth manifold and we denote by $X_0=\lb X, J_0\rb$ the corresponding complex manifold. Since the canonical line bundle $K_{X_0}$ is negative, by the Serre duality and the Kodaira vanishing theorem, we have
\begin{eqnarray}\label{vanish}
H^{0,k}\lb X_0, T^{1,0}X_0\rb=0
\end{eqnarray}
for all $2\leq k\leq n$. In particular, deformations of $X_0$ are unobstructed.

By the work of Kodaira-Spencer, we know that any almost complex structure $J$ on $X$  close to $J_0$ can be described by a unique Beltrami differential $\phi\in A^{0,1}\lb X_0, T^{1,0}X_0\rb$, and $J$ is integrable if and only if
\begin{eqnarray}\label{integ}
\bar\pa_0\phi=\frac 12 \lsb \phi,\phi\rsb.
\end{eqnarray}
In order to construct a complete family of small deformations of  $X_0$, Kuranishi introduced another equation. Let $\Delta_\eps^m\subset \C^m$ be the open ball with center $0$ and radius $\eps$. For any Beltrami differential $\phi_1\in A^{0,1}\lb X_0, T^{1,0}X_0\rb$ with $\bar\pa_0\phi_1=0$, there exists $\eps>0$ such that the equation
\begin{eqnarray}\label{kueq}
\phi(t)=t\phi_1+\frac 12 \bar\pa_0^*G_0 \lsb \phi(t),\phi(t)\rsb
\end{eqnarray}
has a unique power series solution $\phi(t)=\sum_{i\geq 1}t^i\phi_i\in A^{0,1}\lb X_0, T^{1,0}X_0\rb$ which converges (in some appropriate H\"older norm) for all $t\in\Delta_\eps^1$. Here, the Green's function $G_0$ and $\bar\pa_0^*$ are operators on $X_0$ with respect to the K\"ahler metric $\omega_0$. It follows from the standard elliptic estimate and  \eqref{vanish} that each $\phi(t)$ satisfies the integrability equation \eqref{integ} and defines a complex structure on $X$. We also note that
\[
\bar\pa_0^*\lb \phi(t)-t\phi_1\rb=0.
\]

By using this construction and the Kodaira-Spencer theory, one can construct a Kuranishi family in the following way. We pick a basis $\phi_1,\cdots,\phi_m\in \h^{0,1}\lb X_0, T^{1,0}X_0\rb$, where we use $\h$ to denote the harmonic space or harmonic projection with respect to the metric $\omega_0$. Let $B=\Delta_\eps^m\subset\C^m$ be a ball with coordinates $t=\lb t_1,\cdots,t_m\rb$ and denote by
\begin{eqnarray}\label{power10}
\phi(t)=\sum_{i=1}^m t_i\phi_i+\sum_{|I|\geq 2}t^I\phi_{_I}
\end{eqnarray}
the unique solution of
\begin{eqnarray}\label{maineq}
\begin{cases}
\bar\pa_0\phi(t)=\frac 12 \lsb \phi(t),\phi(t)\rsb,\\
\bar\pa_0^*\phi(t)=0,\\
\h\lb \phi(t)\rb=\sum_{i=1}^m t_i\phi_i.
\end{cases}
\end{eqnarray}

We note that the second equation of \eqref{maineq} is the Kuranishi gauge condition, and the third equation characterizes the flat coordinate system around $0\in B$ up to affine transformations.

Now we consider the smooth manifold
\begin{eqnarray}\label{backmfd}
\XX=X\times B
\end{eqnarray}
 and define an almost complex structure $\XJ$ on $\XX$ in the following way: for each point $\lb p,t\rb\in \XX$, where $p\in X$ and $t\in B$, we let
\begin{eqnarray}\label{cxuniv}
\Omega_{(p,t)}^{1,0}\XX=\lb I+\phi(t)\rb\lb \Omega_p^{1,0}X_0\rb\oplus \pi^*\Omega_t^{1,0}B,
\end{eqnarray}
where $\phi(t)$ is given by \eqref{maineq}.
Kodaira and Spencer showed that this almost complex structure $\XJ$ on $\XX$ is integrable and $\pi:\XX\to B$ is a Kuranishi family of $X_0$. For each $t\in B$, we let $X_t=\pi^{-1}(t)$ and denote the corresponding complex structure by $J_t$.

Thanks to the works of Kuranishi \cite{ku62, ku65} and Wavrik \cite{wa69}, we have the following properties of the family $\pi:\XX\to B$; see also \cite{cata88}.

\begin{theorem}\label{versal}
Let $\pi:\XX\to B$ be the Kuranishi family of $X_0$ constructed above. Then
\begin{enumerate}
\item The Kuranishi family of $X_0$ parameterizes all small deformations of $X_0$ and is unique up to isomorphisms;

\smallskip
\item $\pi:\XX\to B$ is semiuniversal at $0\in B$;

\smallskip
\item $\pi:\XX\to B$ is complete at each point $t\in B$;

\smallskip
\item If $h^0\lb X_t, T^{1,0}X_t\rb$ is constant in $t\in B$, then the Kuranishi family is universal at each $t\in B$.
\end{enumerate}
\end{theorem}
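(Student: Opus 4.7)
The plan is to handle the four claims in order, relying on the explicit power-series construction of $\phi(t)$ together with standard Kodaira-Spencer theory, and invoking Wavrik's refinement for the last item.

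For statements (1) and (2), the key observation is that by the third equation in \eqref{maineq} and the choice of basis $\phi_1,\dots,\phi_m$ of $\h^{0,1}(X_0, T^{1,0}X_0)$, the Kodaira-Spencer map $\rho_0 \colon T_0 B \to H^{0,1}(X_0, T^{1,0}X_0)$ sends $\partial/\partial t_i$ to $[\phi_i]$ and is therefore an isomorphism. Semi-universality at $0$ then follows from the classical argument: given a deformation $\pi' \colon \YY \to S$ with central fiber $X_0$, one constructs a holomorphic map $f \colon (S,0) \to (B,0)$ with $df_0 = \rho_0^{-1} \circ \rho'_0$ by writing $\pi'$ locally as a family of Beltrami differentials on $X_0$, converting them into the Kuranishi gauge by applying $\bar\pa_0^* G_0$ inductively in the order of the parameter, and matching the coefficients against \eqref{power10}; convergence is secured by standard elliptic estimates in H\"older norms. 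Uniqueness up to isomorphism in (1) follows by applying semi-universality symmetrically to two such families.

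For (3), completeness at $t \in B$ asserts that every small deformation of $X_t$ is induced from $\pi$ near $t$. My plan is to construct an auxiliary Kuranishi family $\pi_t \colon \mathfrak{X}_t \to B_t$ centered at $X_t$, using a K\"ahler metric $\omega_t$ on $X_t$ obtained by a small perturbation of $\omega_0$; by the argument for (2), this $\pi_t$ is semi-universal at $0 \in B_t$. Applying its semi-universal property to $\pi$ restricted to a neighborhood of $t$ produces a holomorphic map $g \colon (B,t) \to (B_t,0)$ with $g^*\pi_t \cong \pi$. Since both Kodaira-Spencer maps surject onto $H^{0,1}(X_t, T^{1,0} X_t)$, the differential $dg_t$ is surjective; combined with the semi-universal property of $\pi_t$ and the lifting lemma for submersive maps of smooth complex manifolds, this gives completeness of $\pi$ at $t$.

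For (4), Wavrik's universality, the obstruction to promoting semi-universality to universality at $t$ is encoded by the infinitesimal automorphisms $H^0(X_t, T^{1,0} X_t)$: given two maps $f_1, f_2 \colon (S,s_0) \to (B,t)$ both inducing the same deformation from $\pi$, the difference $f_1 - f_2$ at each formal order is represented by a holomorphic vector field on the fiber, which must be integrated to a fiberwise automorphism of the total space in order to absorb the ambiguity. Constancy of $h^0(X_t, T^{1,0} X_t)$ implies, via Grauert's direct image theorem, that $R^0 \pi_* T_{\XX/B}$ is locally free near $t$, and this local freeness is precisely what allows infinitesimal vector fields on the special fiber to be extended to holomorphic sections on a neighborhood and then integrated to biholomorphisms. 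An induction on the order of vanishing of $f_1 - f_2$ then forces $f_1 = f_2$. The main technical obstacle throughout is upgrading formal power-series identities to genuinely convergent holomorphic ones, handled uniformly by elliptic regularity for $G_0$ and its analogues at nearby fibers; the Wavrik step (4) additionally depends on the local freeness of the relative tangent direct image, which is the precise incarnation of the ``no jumping of automorphisms'' hypothesis.
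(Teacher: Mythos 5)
First, a point of reference: the paper does not prove Theorem \ref{versal} at all. It is stated as a quotation from the literature, with the completeness and semi-universality assertions attributed to Kuranishi \cite{ku62, ku65}, the universality criterion (4) to Wavrik \cite{wa69}, and \cite{cata88} given as a survey reference. So there is no in-paper argument to compare against; what can be assessed is whether your sketch faithfully reconstructs the classical proofs. For items (1), (2) and (4) it essentially does: semi-universality at $0$ is Kuranishi's completeness theorem at the center combined with the observation that the Kodaira--Spencer map at $0$ sends $\pa/\pa t_i$ to $\lsb\phi_i\rsb$ and is therefore bijective; uniqueness of the germ follows by playing the semi-universal properties of two such families against each other (the composite of the two classifying maps has identity differential, hence is a local isomorphism); and Wavrik's criterion does run through the local freeness of $R^0\pi_*T_{\XX/B}$ (Grauert plus constancy of $h^0$), the extension of fiberwise holomorphic vector fields to relative ones, and an order-by-order absorption of the ambiguity, with elliptic estimates upgrading formal identities to convergent ones.

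The genuine gap is in (3). You derive completeness of $\pi$ at $t\ne 0$ from surjectivity of $dg_t$, which you in turn deduce from the assertion that both Kodaira--Spencer maps surject onto $H^{0,1}\lb X_t,T^{1,0}X_t\rb$; but the surjectivity of $KS_t:T_t^{1,0}B\to H^{0,1}\lb X_t,T^{1,0}X_t\rb$ for $t\ne 0$ is logically equivalent to completeness of $\pi$ at $t$ (given the auxiliary semi-universal family $\pi_t$, compose with the inverse of its Kodaira--Spencer isomorphism at the center), and it does not follow from semi-universality at $0$. Indeed the paper itself, in the proof of Theorem \ref{fkemain}, quotes surjectivity of $KS_t$ for $t\ne 0$ as a \emph{consequence} of Theorem \ref{versal}(3), not as an input. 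As written, your step is therefore circular. Kuranishi's actual argument in \cite{ku65} proves completeness at every point of a neighborhood of $0$ directly, by running the implicit-function-theorem/elliptic iteration uniformly in the base point (equivalently, by the openness of versality); if you wish to keep the reduction to an auxiliary Kuranishi family centered at $X_t$, you must supply an independent proof that $KS_t$ is surjective for $t\ne 0$, and that is precisely the hard content of statement (3). (Note also that the needed inequality $h^{0,1}\lb X_t,T^{1,0}X_t\rb\le m$ is not by itself enough, since upper semicontinuity of the target dimension says nothing about the rank of $KS_t$.)
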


In general, the complex structure $J_t$ is not compatible with $\omega_0$, which is viewed as a symplectic form on $X$. The compatibility property requires $\phi(t)\lrcorner\omega_0=0$. Since $\bar\pa_0^*\phi(t)=0$, a direct computation shows that $\phi(t)\lrcorner\omega_0=0$ if and only if $\dd_0\phi(t)=0$. This divergence gauge was introduced in \cite{sundeform1} and \cite{sundeform2}, where it was shown that the Kuranishi gauge $\bar\pa_0^*\phi(t)=0$ is equivalent to the divergence gauge $\dd_0\phi(t)=0$ when the fibers $X_t$ are \ke manifolds with negative or zero scalar curvature. In this section, we generalize this equivalence to the Fano case.

\begin{theorem}\label{equivmain}
Let $\lb X_0,\omega_0\rb$ be a Fano \ke manifold.
\smallskip
\begin{enumerate}

\item If $\phi(t)$ is the unique solution of equations \eqref{maineq}, then $\dd_0\phi(t)=0$ and $\phi(t)\lrcorner\omega_0=0$.

\smallskip
\item If $\phi\in A^{0,1}\lb X_0, T^{1,0}X_0\rb$ is a Beltrami differential with $\bar\pa_0\phi=\frac 12 \lsb\phi, \phi\rsb$ and $\dd_0\phi=0$, then $\bar\pa_0^*\phi=0$ and $\phi(t)\lrcorner\omega_0=0$.
\end{enumerate}
\end{theorem}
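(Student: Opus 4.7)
The two parts are tied by a single pointwise algebraic identity, and both reduce to the vanishing $\alpha:=\phi\lrcorner\omega_0=0$. Writing $\tilde\phi_{\bar l\bar j}=g_{i\bar l}\phi^i_{\bar j}$, a direct coordinate calculation using $\nabla_k g_{i\bar l}=0$ yields
\begin{equation*}
\bar\pa_0^*(\phi\lrcorner\omega_0)\;=\;i\bigl(\dd_0\phi+(\bar\pa_0^*\phi)^\flat\bigr),
\end{equation*}
where $(\cdot)^\flat$ denotes the $(0,1)$-form obtained from a $T^{1,0}$-valued function by lowering its index with $g$. Since $\phi\lrcorner\omega_0=0$ is precisely the symmetry $\tilde\phi_{\bar l\bar j}=\tilde\phi_{\bar j\bar l}$, that symmetry forces $\dd_0\phi=-(\bar\pa_0^*\phi)^\flat$, so once the vanishing of $\alpha$ is proved the remaining gauge condition in each of (1) and (2) is automatic.

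To pin down $\alpha$, I will derive an elliptic equation for it. Integrability $\bar\pa_0\phi=\tfrac{1}{2}[\phi,\phi]$ combined with $\bar\pa_0\omega_0=0$ gives $\bar\pa_0\alpha=\tfrac{1}{2}[\phi,\phi]\lrcorner\omega_0$. Commuting $\nabla_{\bar k}$ past $\nabla_i$ in the computation of $\bar\pa_0(\dd_0\phi)$ produces a Ricci contraction plus a full-Riemann piece, and the latter cancels on antisymmetrization in $(\bar k,\bar j)$ by the K\"ahler symmetry $R_{n\bar j l\bar k}=R_{n\bar k l\bar j}$; the Fano Kähler--Einstein assumption $\rc(\omega_0)=\omega_0$ collapses the Ricci piece to $-i\alpha$, so that
\begin{equation*}
\bar\pa_0(\dd_0\phi)\;=\;\dd_0\bigl(\tfrac{1}{2}[\phi,\phi]\bigr)\,-\,i\alpha.
\end{equation*}
Combined with the pointwise identity above under whichever gauge is assumed, this yields an elliptic equation
\begin{equation*}
(\Box_{\bar\pa}-\mathrm{Id})\,\alpha\;=\;\bar\pa_0^*\bigl(\tfrac{1}{2}[\phi,\phi]\lrcorner\omega_0\bigr)\,+\,i\,\dd_0\bigl(\tfrac{1}{2}[\phi,\phi]\bigr),
\end{equation*}
with right-hand side quadratic in $\phi$.

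The Bochner--Weitzenb\"ock formula on a Fano \ke manifold with $\rc(\omega_0)=\omega_0$ bounds the spectrum of $\Box_{\bar\pa}$ on $A^{0,p}(X_0)$ below by $p$, so $\Box_{\bar\pa}-\mathrm{Id}$ is invertible on $A^{0,2}(X_0)$. Expanding $\phi(t)=\sum_{|I|\geq 1}t^I\phi_I$ and inducting on $|I|$: at $|I|=1$ the quadratic right-hand side contributes nothing, so $\alpha_1=0$ and each harmonic generator $\phi_i$ satisfies $\phi_i\lrcorner\omega_0=0$ and $\dd_0\phi_i=0$. For $|I|\ge 2$, assume $\phi_J\lrcorner\omega_0=0$ (equivalently, $\tilde\phi_J$ is symmetric) and $\dd_0\phi_J=0$ for all $|J|<|I|$. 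Then $[\phi_{J_1},\phi_{J_2}]\lrcorner\omega_0$ at order $|I|$ vanishes: the coefficient of $d\bar z^k\wedge d\bar z^j\wedge d\bar z^m$ is the cyclic sum of $\phi^l_{J_r,\bar k}\nabla_l\tilde\phi_{J_s,\bar m\bar j}$-type terms, and the six summands cancel in pairs under the symmetry of $\tilde\phi_{J_r}$. Similarly $\dd_0[\phi_{J_1},\phi_{J_2}]=0$: after commuting $\nabla_i$ past $\nabla_l$ on the K\"ahler background, the term $\phi^l_{J_r,\bar k}\nabla_l(\dd_0\phi_{J_s})_{\bar j}$ drops out by $\dd_0\phi_{J_s}=0$, and the surviving $\nabla_i\phi^l_{J_r,\bar k}\nabla_l\phi^i_{J_s,\bar j}$-type terms cancel after lowering indices by $g$ and performing the dummy swap $\bar m\leftrightarrow\bar n$, $l\leftrightarrow i$. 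Thus the right-hand side vanishes at order $|I|$, so $(\Box_{\bar\pa}-\mathrm{Id})\alpha_I=0$ forces $\alpha_I=0$, closing the induction; the same scheme handles Part (2) upon exchanging the roles of $\dd_0\phi$ and $(\bar\pa_0^*\phi)^\flat$ in the derivation of the equation for $\alpha$.

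I expect the principal obstacle to be the inductive step, specifically the cancellation of $\dd_0[\phi_{J_1},\phi_{J_2}]$. Unlike $[\phi_{J_1},\phi_{J_2}]\lrcorner\omega_0$, whose vanishing falls out cleanly from cyclic antisymmetrization, the divergence term requires both halves of the inductive hypothesis simultaneously, together with the commutation $[\nabla_k,\nabla_l]=0$ on holomorphic tensors in the K\"ahler setting and a careful dummy-index rename. The Fano K\"ahler--Einstein geometry is essential both through the eigenvalue bound on $A^{0,2}(X_0)$ and through the Ricci simplification in the identity for $\bar\pa_0(\dd_0\phi)$.
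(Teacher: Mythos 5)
Your proposal is correct and follows essentially the same route as the paper's proof: both reduce the two gauge conditions to the single vanishing $\phi_{_I}\lrcorner\omega_0=0$, establish it by induction on $|I|$ via the eigenvalue equation $\Delta_{\bar\pa}\lb \phi_{_I}\lrcorner\omega_0\rb=\phi_{_I}\lrcorner\omega_0$ (your curvature commutation is exactly the paper's Lemma 2.3(a) combined with $\text{Ric}\lb\omega_0\rb=\omega_0$), and rule out the eigenvalue $1$ on $\bar\pa$-closed $(0,2)$-forms by Fano positivity --- the paper via a Bochner formula (Lemma 2.3(b)), you via the Bochner--Kodaira--Nakano gap $\Box_{\bar\pa}\geq p$ on $A^{0,p}$, which serves the same purpose. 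The only place you over-engineer is Part (2), where no ellipticity is needed: since $\tfrac12\dd_0\lsb\phi,\phi\rsb=\phi\lrcorner\pa_0\lb\dd_0\phi\rb$ holds unconditionally for the self-bracket (a dummy-index swap, with no appeal to the symmetry of $\tilde\phi$), the identity $0=\bar\pa_0\lb\dd_0\phi\rb=\tfrac12\dd_0\lsb\phi,\phi\rsb-2\sqrt{-1}\,\phi\lrcorner\omega_0$ gives $\phi\lrcorner\omega_0=0$ directly, which sidesteps the circularity that would arise if your cancellation of $\dd_0\lsb\phi,\phi\rsb$ genuinely required $\tilde\phi$ to be symmetric.
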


To prove this theorem, we need the following technical results.

\begin{lemma}\label{harbel}
Let $\lb X,\omega_g\rb$ be a \ka manifold.
\smallskip
\begin{enumerate}
\item[(a)] If $\phi\in A^{0,1}\lb X, T^{1,0}X\rb$ with $\bar\pa\lb \phi\lrcorner\omega\rb=0$ and $\bar\pa^*\phi=0$, then
\[
\Delta_{\bar\pa}\lb\phi\lrcorner\omega\rb=\frac{\sqrt{-1}}{2}\dd\lb\bar\pa\phi\rb+\phi\lrcorner\text{Ric}\lb\omega\rb.
\]

\item[(b)] If $\lb X,\omega_g\rb$ is Fano K\"ahler-Einstein, and $\eta\in A^{0,2}\lb X\rb$ such that $\bar\pa\eta=0$ and $\Delta_{\bar\pa}\eta=\eta$, then $\eta=0$.
\end{enumerate}
\end{lemma}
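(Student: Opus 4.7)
The plan is to treat (a) and (b) separately.

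For part (a), my starting observation will be that since $\phi\lrcorner\omega$ is $\bar\pa$-closed by hypothesis, one has $\Delta_{\bar\pa}(\phi\lrcorner\omega) = \bar\pa\,\bar\pa^*(\phi\lrcorner\omega)$, so it suffices to identify $\bar\pa^*(\phi\lrcorner\omega)$ and then apply $\bar\pa$. First I would write $\phi = \phi^a_{\bar b}\partial_a \otimes d\bar z^b$ in local coordinates and compute the antisymmetrized components of $\phi\lrcorner\omega$, then compute $\bar\pa^*(\phi\lrcorner\omega)$ directly using $\nabla g = 0$ on the K\"ahler manifold. This should produce a formula of the form
\[
\bar\pa^*(\phi\lrcorner\omega) = C_1(\bar\pa^*\phi)\lrcorner\omega + C_2\,\dd\phi,
\]
so the hypothesis $\bar\pa^*\phi = 0$ will collapse it to a constant multiple of $\dd\phi$. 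I would then apply $\bar\pa$ and commute $\bar\pa$ past $\dd$ via the K\"ahler Ricci identity. Concretely, in $(\bar\pa\dd\phi)_{\bar p\bar b} = \nabla_{\bar p}\nabla_a\phi^a_{\bar b} - \nabla_{\bar b}\nabla_a\phi^a_{\bar p}$, swapping $\nabla_{\bar p}\nabla_a = \nabla_a\nabla_{\bar p} + [\nabla_{\bar p},\nabla_a]$ produces $\dd(\bar\pa\phi)$ plus commutator terms. The commutator $[\nabla_{\bar p},\nabla_a]\phi^a_{\bar b}$ a priori contributes curvature from both indices of $\phi^a_{\bar b}$, but the contribution from the lower antiholomorphic index will be symmetric in $\bar p \leftrightarrow \bar b$ thanks to the K\"ahler symmetry $R_{c\bar b a\bar p} = R_{c\bar p a\bar b}$, and hence will cancel upon antisymmetrization. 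The surviving contribution from the upper index contracts to the Ricci tensor, producing a term of shape $-R_{d\bar p}\phi^d_{\bar b} + R_{d\bar b}\phi^d_{\bar p}$, which is (up to a factor of $\sqrt{-1}$) the component expression of $\phi\lrcorner\rc(\omega)$. Combining these pieces will give the claimed identity.

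For part (b), I would invoke the standard Bochner--Weitzenb\"ock formula for $(0,q)$-forms on a K\"ahler manifold, which reads
\[
\Delta_{\bar\pa}\eta = \nabla^*\nabla\eta + \text{(Ricci endomorphism on each antiholomorphic index of }\eta\text{)}.
\]
On a Fano \ke manifold with $\rc(\omega) = \omega$ and $q = 2$, this reduces to $\Delta_{\bar\pa}\eta = \nabla^*\nabla\eta + 2\eta$. Substituting the hypothesis $\Delta_{\bar\pa}\eta = \eta$ yields $\nabla^*\nabla\eta = -\eta$; pairing with $\eta$ in $L^2$ and integrating by parts on the compact $X$ gives
\[
\int_X |\nabla\eta|^2\,\omega^n = -\int_X |\eta|^2\,\omega^n,
\]
forcing $\eta = 0$.

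The hard part will be in (a): bookkeeping of signs and factors of $\sqrt{-1}$ among the four natural objects $\phi$, $\phi\lrcorner\omega$, $\dd\phi$, and $\phi\lrcorner\rc(\omega)$, and verifying the cancellation of the antiholomorphic-index curvature contribution. Once the symmetry-driven cancellation is in place, the identity will reduce to an algebraic matching of coefficients, and the precise overall factor $\sqrt{-1}/2$ on the $\dd(\bar\pa\phi)$ term should fall out from careful normalization of $\phi\lrcorner\omega$ as a $(0,2)$-form.
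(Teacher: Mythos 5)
Your proposal is sound; here is how it sits relative to the paper. For part (a) the paper offers no argument at all --- it defers the ``direct computation'' to \cite{sundeform1} and \cite{sundeform2} --- so your coordinate computation is essentially the content of those references, and your outline is structurally correct: $\bar\pa^*\lb\phi\lrcorner\omega\rb$ does reduce to a multiple of $\dd\phi$ once $\bar\pa^*\phi=0$ is imposed, and in the commutator $[\nabla_{\bar p},\nabla_a]\phi^a_{\bar b}$ the contribution from the lower antiholomorphic index is $g^{c\bar m}R_{a\bar p c\bar b}\,\phi^a_{\bar m}$, which is symmetric under $\bar p\leftrightarrow\bar b$ by the K\"ahler symmetry $R_{a\bar pc\bar b}=R_{a\bar bc\bar p}$ and so dies upon antisymmetrization, leaving exactly the Ricci contraction from the upper index; what remains is indeed only normalization of constants. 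For part (b) you take a genuinely different route. The paper computes the pointwise Bochner formula $\Delta|\eta|^2=|\pa\eta|^2+|\bar\nabla\eta|^2$, integrates over the compact manifold to conclude $\pa\eta=0$, notes that $\pa^*\eta=0$ automatically for type reasons, and finishes with the K\"ahler identity $\eta=\Delta_{\bar\pa}\eta=\Delta_\pa\eta=0$. You instead use the Bochner--Kodaira--Weitzenb\"ock decomposition of $\Delta_{\bar\pa}$ itself. Your formula is correct provided $\nabla^*\nabla$ is read as the partial rough Laplacian $\nabla^{0,1*}\nabla^{0,1}$: since $(0,q)$-forms are conjugates of sections of the holomorphic bundle $\Omega^q$, the relevant trace-contracted curvature is the derivation extension of the Ricci endomorphism, so only Ricci (and no full curvature operator) enters, acting once per antiholomorphic index, and the sign is the one consistent with the Bochner proof that $H^{0,q}$ vanishes on Fano manifolds. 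Your argument buys something the paper's does not make explicit: it never uses the hypothesis $\bar\pa\eta=0$ and in fact shows that the first eigenvalue of $\Delta_{\bar\pa}$ on $(0,2)$-forms of a Fano K\"ahler-Einstein manifold is at least $2$, of which claim (b) is the special case that $1$ is not an eigenvalue. Either route is complete and correct.
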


\begin{proof}
The first claim (a) follows from direct computations; we refer the reader to  \cite{sundeform1} and \cite{sundeform2} for details. To prove the second claim, by the assumptions, we have the following Bochner formula,
\[
\Delta|\eta|^2=\left |\pa\eta\right |^2+\left |\bar\nabla\eta\right |^2.
\]
This implies $\pa\eta=0$. Since $\pa^*\eta=0$, we conclude that $\Delta_\pa\eta=0$. Thus
\[
\eta=\Delta_{\bar\pa}\eta=\Delta_\pa\eta=0.
\] \end{proof}
Now we can prove Theorem \ref{equivmain}.

\begin{proof}
The proof of claim (2) is similar to that in \cite{sundeform1}. Indeed, since $\dd_0\phi=0$, we have
\begin{align*}
\begin{split}
0=\bar\pa_0\lb\dd_0\phi\rb =& \dd_0\lb\bar\pa_0\phi\rb-2\sqrt{-1}\phi\lrcorner\text{Ric}\lb\omega_0\rb \\
=& \frac 12 \dd_0\lsb\phi,\phi\rsb-2\sqrt{-1}\phi\lrcorner\omega_0\\
=& \phi\lrcorner \pa_0\lb \dd_0\phi\rb-2\sqrt{-1}\phi\lrcorner\omega_0\\
=& -2\sqrt{-1}\phi\lrcorner\omega_0.
\end{split}
\end{align*}
Together with $\dd_0\phi=0$, a direct computation shows that $\bar\pa_0^*\phi=0$.

Now we prove claim (1). Consider the power series \eqref{power10} which satisfies equations \eqref{maineq}. We will use induction on $|I|$ to show that $\dd_0\phi_{_I}=0$. If $|I|=1$, then $\phi_{_I}=\phi_i$ for some $1\leq i\leq m$ which is harmonic. Thus
\begin{eqnarray}\label{prepare10}
\bar\pa_0\lb\phi_i\lrcorner\omega_0\rb=0 \quad \text{and}\quad \bar\pa_0^*\phi_i=0.
\end{eqnarray}
Then Lemma \ref{harbel} implies that
\begin{eqnarray}\label{prepare20}
\Delta_{\bar\pa_0}\lb\phi_i\lrcorner\omega_0\rb=\frac{\sqrt{-1}}{2}\dd_0\lb\bar\pa_0\phi_i\rb+\phi_i\lrcorner\text{Ric}\lb\omega_0\rb.
\end{eqnarray}
Since $\bar\pa_0\phi_i=0$ and $\text{Ric}\lb\omega_0\rb=\omega_0$, we have
\begin{eqnarray}\label{prepare30}
\Delta_{\bar\pa_0}\lb\phi_i\lrcorner\omega_0\rb=\phi_i\lrcorner\omega_0.
\end{eqnarray}
Again by Lemma \ref{harbel}, we know that $\phi_i\lrcorner\omega_0=0$. Combining with $\bar\pa_0^*\phi_i=0$ we get $\dd_0\phi_i=0$.

Now we assume $\dd_0\phi_{_I}=0$ for all $|I|\leq k-1$. For any multi-index $I$ with $|I|=k$, we have
\begin{align*}
\begin{split}
\bar\pa_0\lb \phi_{_I}\lrcorner\omega_0\rb=& \bar\pa_0 \phi_{_I}\lrcorner\omega_0 \\
=& \frac 12\sum_{J+K=I}\lsb \phi_{_J},\phi_{_K}\rsb\lrcorner\omega_0\\
=& \frac 12\sum_{J+K=I} \lb \phi_{_J}\lrcorner\pa_0
\lb\phi_{_K}\lrcorner\omega_0\rb+\phi_{_K}\lrcorner\pa_0\lb\phi_{_J}\lrcorner\omega_0\rb\rb = 0.
\end{split}
\end{align*}
Since $\bar\pa_0^*\phi_{_I}=0$, we conclude from  Lemma \ref{harbel} that
\begin{align*}
\begin{split}
\Delta_{\bar\pa}\lb \phi_{_I}\lrcorner \omega_0\rb=& \frac{\sqrt{-1}}{2}\dd_0\lb\bar\pa_0\phi_{_I}\rb+\phi_{_I}\lrcorner \text{Ric}\lb \omega_0\rb \\
=& \frac{\sqrt{-1}}{4}\lb \sum_{J+K=I}\lsb \phi_{_J},\phi_{_K}\rsb\rb
+ \phi_{_I}\lrcorner \omega_0\\
=& \frac{\sqrt{-1}}{4}\lb \sum_{J+K=I}\phi_{_J}\lrcorner\pa_0\lb \dd_0\phi_{_K}\rb+\phi_{_K}\lrcorner\pa_0\lb \dd_0\phi_{_J}\rb\rb+ \phi_{_I}\lrcorner \omega_0\\
=& \phi_{_I}\lrcorner \omega_0,
\end{split}
\end{align*}
where we have used the fact that $\dd_0\phi_{_J}=\dd_0\phi_{_K}=0$ for all $|J|,|K|<|I|$. It then follows from Lemma \ref{harbel} that $\phi_{_I}\lrcorner\omega_0=0$. Together with the assumption $\bar\pa_0^*\phi_{_I}=0$, we conclude that $\dd_0\phi_{_I}=0$.

\end{proof}

\begin{remark}\label{generalgauge}
Let $\lb X_0,\omega_0\rb$ be a Fano manifold with $\lsb \omega_0\rsb=2\pi c_1\lb X_0\rb$ and let $\pi:\XX\to B$ be a Kuranishi family of $X_0$ defined by \eqref{backmfd} and \eqref{cxuniv} where $\phi(t)$ is the unique solution of equations \eqref{maineq}.
\begin{enumerate}

\item For any Beltrami differentials $\phi,\psi\in  A^{0,1}\lb X_0,T^{1,0}X_0\rb$ with $\phi\lrcorner\omega_0=0$ or $\psi\lrcorner\omega_0=0$, the pointwise Hermitian inner product is given by
\begin{eqnarray}\label{wp10}
\phi\cdot\bar\psi= \langle \phi,\psi\rangle_g=\phi_{\bar j}^i\bar{\psi_{\bar k}^l}g_{i\bar l}g^{k\bar j}=\phi_{\bar j}^i\bar{\psi_{\bar i}^j},
\end{eqnarray}
where $g$ is the corresponding \ka metric. 

\item If $\omega_0$ is a \ke metric, then each $J_t$ is compatible with $\omega_0$. This implies that, in the \ke case, the Kuranishi gauge is compatible with Donaldson's infinite dimensional GIT picture. In fact, let $\omega=\omega_0$ be the symplectic form on $X$ and let $\mathcal J^{int}$ be the space of integrable almost complex structures on $X$ which are compatible with $\omega$. Theorem \ref{equivmain} shows that $B$ can be viewed naturally as a slice in $\mathcal J^{int}$ containing $J_0$ via Kuranishi's construction described above.

\item Theorem \ref{equivmain} holds in more general situation if we allow appropriate twist. Let $f$ be the normalized Ricci potential satisfying
\[
\begin{cases}
\text{Ric}\lb\omega_0\rb=\omega_0+\frac{\sqrt{-1}}{2}\pa_0\bar\pa_0 f\\
\int_{X_0}f\omega_0^n=0.
\end{cases}
\]
If we define the twisted operators $\bar\pa_f^*$ and $\dd_f$ with respect to the weighted volume form $e^f\frac{\omega_0^n}{n!}$,  then the twisted Kuranishi gauge $\bar\pa_f^* \phi(t)=0$ is equivalent to the twisted divergence gauge $\dd_f \phi(t)=0$. In particular, we still have $\phi(t)\lrcorner\omega_0=0$. The proof is essentially the same as that of Theorem \ref{equivmain}.

\end{enumerate}
\end{remark}

An immediate corollary of Theorem \ref{equivmain} is the explicit expression of a Ricci potential of the \ka manifold $\lb X_t,\omega_0\rb$. This turns out to play an important role in the proof of Theorem \ref{ip10} (Theorem \ref{fkemain}). As above, let $\lb X_0,\omega_0\rb$ be a Fano \ke manifold, let $\phi(t)$ be the solution of equation \eqref{maineq} and let $\lb\XX, B,\pi\rb$ be the Kuranishi family of $\lb X_0,\omega_0\rb$ constructed above. Then Theorem \ref{equivmain} implies that the symplectic form $\omega_0$ is indeed a \ka form on $X_t$.

\begin{corollary}\label{riccipotential}
A Ricci potential of the \ka manifold $\lb X_t,\omega_0\rb$ is given by
\begin{eqnarray}\label{rcpot}
h_t=\log\det\lb I-\phi(t)\bar{\phi(t)}\rb.
\end{eqnarray}
Namely,
\begin{eqnarray}\label{rcmain}
\text{Ric}\lb X_t,\omega_0\rb=\omega_0+\frac{\sqrt{-1}}{2}\pa_t\bar\pa_t \log\det\lb I-\phi(t)\bar{\phi(t)}\rb.
\end{eqnarray}
\end{corollary}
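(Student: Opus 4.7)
By Theorem \ref{equivmain}, $\phi(t) \lrcorner \omega_0 = 0$, so $\omega_0$ is compatible with $J_t$ and hence a K\"ahler form on $(X_t, J_t)$; in particular, $\text{Ric}(X_t, \omega_0)$ is a well-defined closed real $(1,1)_t$-form. The plan is to compute this Ricci form explicitly via a volume-form comparison in local coordinates.

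Working in local $J_0$-holomorphic coordinates $z^1, \ldots, z^n$, I would introduce the $J_t$-$(1,0)$-forms $\beta^i := dz^i - \phi(t)^i_{\bar l}\, d\bar z^l$. A direct check shows that each $\beta^i$ annihilates the frame $\partial_{\bar z^j} + \phi(t)^k_{\bar j}\partial_{z^k}$ of $T^{0,1}X_t$, so $\tilde\Omega_t := \beta^1 \wedge \cdots \wedge \beta^n$ is a nowhere-vanishing $J_t$-$(n,0)$-form. Evaluating the block determinant $\det\!\bigl(\begin{smallmatrix} I & -\phi(t) \\ -\overline{\phi(t)} & I \end{smallmatrix}\bigr) = \det(I - \phi(t)\overline{\phi(t)})$ yields the identity
\begin{equation*}
\tilde\Omega_t \wedge \overline{\tilde\Omega_t} = \det\!\lb I - \phi(t)\overline{\phi(t)}\rb \, dz^1 \wedge d\bar z^1 \wedge \cdots \wedge dz^n \wedge d\bar z^n.
\end{equation*}

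Next, choose local $J_t$-holomorphic coordinates $w^1, \ldots, w^n$ (provided by Newlander--Nirenberg). Since each $dw^i$ is a $J_t$-$(1,0)$-form, the identity $\bar\partial_t w^i = 0$ forces $dw^i = P^i_k \beta^k$ with $P^i_k := \partial w^i/\partial z^k$. A routine real-Jacobian computation gives that the Jacobian of the real change of coordinates $(z,\bar z) \mapsto (w,\bar w)$ equals $|\det P|^2\, \det(I - \phi(t)\overline{\phi(t)})$. Comparing $\omega_0^n/n! = c_n \det(g_{i\bar j})\, dz^1 \wedge d\bar z^1 \wedge \cdots \wedge dz^n \wedge d\bar z^n$ with the analogous expression in $w$-coordinates yields
\begin{equation*}
\det(g_{t,w}) \;=\; \frac{\det(g_{i\bar j})}{|\det P|^2 \, \det\!\lb I - \phi(t)\overline{\phi(t)}\rb}.
\end{equation*}

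Applying the standard Ricci formula $\text{Ric}(X_t, \omega_0) = -\frac{\sqrt{-1}}{2}\partial_t\bar\partial_t \log\det(g_{t,w})$ in $J_t$-holomorphic coordinates and expanding the logarithm gives
\begin{equation*}
\text{Ric}(X_t, \omega_0) \;=\; -\tfrac{\sqrt{-1}}{2}\partial_t\bar\partial_t \log\det(g_{i\bar j}) + \tfrac{\sqrt{-1}}{2}\partial_t\bar\partial_t \log|\det P|^2 + \tfrac{\sqrt{-1}}{2}\partial_t\bar\partial_t \log\det\!\lb I - \phi(t)\overline{\phi(t)}\rb.
\end{equation*}
The proof will then be complete once we establish the identity
\begin{equation*}
-\tfrac{\sqrt{-1}}{2}\partial_t\bar\partial_t \log\det(g_{i\bar j}) + \tfrac{\sqrt{-1}}{2}\partial_t\bar\partial_t \log|\det P|^2 \;=\; \omega_0.
\end{equation*}
This is the main technical obstacle. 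It rests on combining the K\"ahler--Einstein equation $-\tfrac{\sqrt{-1}}{2}\partial_0\bar\partial_0 \log\det(g_{i\bar j}) = \omega_0$ on $X_0$ with the interpretation of $\log|\det P|^2$ as precisely the correction term converting $\partial_0\bar\partial_0$ into $\partial_t\bar\partial_t$ when acting on the real function $\log\det(g_{i\bar j})$. One verifies this pointwise, using normal K\"ahler coordinates for $\omega_0$ at the base point together with the Kuranishi gauge condition $\bar\partial_0^*\phi(t) = 0$ (equivalent to $\dd_0\phi(t) = 0$ by Theorem \ref{equivmain}) and the integrability $\bar\partial_0\phi(t) = \tfrac{1}{2}[\phi(t),\phi(t)]$; the $(1,1)_t$-part of the cross-terms is absorbed precisely by $\log|\det P|^2$. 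Combining yields the formula $\text{Ric}(X_t, \omega_0) = \omega_0 + \tfrac{\sqrt{-1}}{2}\partial_t\bar\partial_t \log\det(I - \phi(t)\overline{\phi(t)})$, as claimed.
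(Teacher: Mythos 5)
Your setup is correct and follows the same route as the paper's own proof: the paper likewise writes $e^{h_t}\frac{\omega_0^n}{n!}=c_n\left|\det A\right|^{-2}g\,dw\wedge d\bar w$ (your $P$ is the paper's $A$), so in both treatments the corollary reduces to the single identity $-\frac{\sqrt{-1}}{2}\pa_t\bar\pa_t\log\lb g\left|\det P\right|^{-2}\rb=\omega_0$. Your Jacobian factor $\left|\det P\right|^2\det\lb I-\phi(t)\bar{\phi(t)}\rb$ and the resulting three-term decomposition of the Ricci form are both right.

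The gap is that the identity you flag as ``the main technical obstacle'' is the entire content of the proof, and your sketch of it does not stand on its own. The heuristic that $\log\left|\det P\right|^2$ is ``precisely the correction term converting $\pa_0\bar\pa_0$ into $\pa_t\bar\pa_t$'' is not a general fact about changes of complex structure: for a generic Beltrami differential the combination $-\pa_t\bar\pa_t\log g+\pa_t\bar\pa_t\log\left|\det P\right|^2+\pa_0\bar\pa_0\log g$ does not vanish. What actually happens in the paper's computation is that expanding $-\pa_{w_\ga}\pa_{\bar w_\gb}\log\lb \left|\det P\right|^{-2}g\rb$ via the relations $\frac{\pa w_\ga}{\pa\bar z_j}=\phi_{\bar j}^i a_{\ga i}$ and $\frac{\pa z_i}{\pa w_\ga}=\lb I-\phi\bar\phi\rb^{ik}b^{k\ga}$ produces (i) the Ricci tensor $R_{k\bar l}$ of $\omega_0$, which the \ke equation turns into $g_{k\bar l}$, and (ii) additional terms built from the vector field $\mu^k=\lb I-\phi\bar\phi\rb^{ik}\lsb \bar{\phi_{\bar i}^j}\lb\dd_0\phi\rb_{\bar j}-\lb\bar{\dd_0\phi}\rb_i\rsb$, which vanish only because $\dd_0\phi(t)=0$ (Theorem \ref{equivmain}); the surviving expression $\frac{\pa\bar z_l}{\pa\bar w_\gb}b^{k\ga}g_{k\bar l}\,dw_\ga\wedge d\bar w_\gb$ is then identified with $\omega_0$ using the symmetry $\phi_{\bar j}^ig_{i\bar l}=\phi_{\bar l}^ig_{i\bar j}$, i.e.\ $\phi\lrcorner\omega_0=0$. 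You name the right ingredients, but without exhibiting how the divergence terms cancel and how the leftover is recognized as $\omega_0$, the decisive step is missing --- and it is exactly the step that would fail in a gauge other than the Kuranishi/divergence gauge.
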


\begin{proof}
We want to show that $-\pa_t\bar\pa_t \log \lb e^{h_t}\frac{\omega_0^n}{n!}\rb=\omega_0$. Fixing a point $t\in B$ and we let $\phi=\phi(t)$, $z=\lb z_1,\cdots,z_n\rb$ be  local holomorphic coordinates on $X_0$, $w=\lb w_1,\cdots,w_n\rb$ be local holomorphic coordinates on $X_t$, $\omega_0=\frac{\sqrt{-1}}{2}g_{i\bar j}dz_i\wedge d\bar z_j$, $g=\det\lsb g_{i\bar j}\rsb$, $A=\lsb a_{\ga i}\rsb=\lsb \frac{\pa w_\ga}{\pa z_i}\rsb$ and $B=\lsb b^{i\ga}\rsb=A^{-1}$. Then
\[
e^{h_t}\frac{\omega_0^n}{n!}=c_n \left |\det A\right |^{-2} g dw_1\wedge\cdots\wedge dw_n\wedge d\bar w_1\wedge\cdots\wedge d\bar w_n
\]
where $c_n=\lb -1\rb^{\frac{n(n-1)}{2}}\lb \frac{\sqrt{-1}}{2}\rb^n$. On the other hand, we have
\begin{eqnarray}\label{aux20}
\frac{\pa w_\ga}{\pa\bar z_j}=\phi_{\bar j}^i a_{\ga i},\ \ \ \
\frac{\pa z_i}{\pa w_\ga}=\lb I-\phi\bar\phi\rb^{ik}b^{k\ga},\ \ \ \
\frac{\pa z_i}{\pa\bar w_\gb}=-\phi_{\bar j}^i\bar{\lb I-\phi\bar\phi\rb^{jl}}\bar{b^{l\gb}},
\end{eqnarray}
where $\lb I-\phi\bar\phi\rb^{ik}$ is the $\lb i,k\rb$-entry of the matrix $\lb I-\phi\bar\phi\rb^{-1}$. By a direct computation, we have
\begin{align}\label{aux30}
\begin{split}
-\frac{\pa^2}{\pa w_\ga\pa\bar w_\gb}\log \lb c_n \left |\det A\right |^{-2} g\rb=& \frac{\pa\bar z_l}{\pa\bar w_\gb}b^{k\ga} \lsb
\frac{\pa}{\pa z_k}\phi_{\bar l}^p\mu^p+R_{k\bar l}+\frac{\pa}{\pa z_k} \lb \lb\dd_0\phi\rb_{\bar l}\rb\rsb\\
& -\frac{\pa\bar z_l}{\pa\bar w_\gb}b^{k\ga}\lb \frac{\pa}{\pa\bar z_l}-\phi_{\bar l}^i \frac{\pa}{\pa z_i}\rb\lb \mu^k\rb,
\end{split}
\end{align}
where $$\mu^k=\lb I-\phi\bar\phi\rb^{ik}\lsb \bar{\phi_{\bar i}^j}\lb \dd_0\phi\rb_{\bar j}-\lb \bar{\dd_0\phi}\rb_i\rsb,$$ and $\frac{\sqrt{-1}}{2}R_{i\bar j}dz_i\wedge d\bar z_j$ is the Ricci form of $\lb X_0,\omega_0\rb$. Since $\omega_0$ is a \ke metric on $X_0$, we have $R_{i\bar j}=g_{i\bar j}$. By Theorem \ref{equivmain}, we know $\dd_0\phi=0$ which implies $\mu^k=0$. Hence the above formula reduces to
\begin{eqnarray}\label{aux40}
-\frac{\pa^2}{\pa w_\ga\pa\bar w_\gb}\log \lb c_n \left |\det A\right |^{-2} g\rb=\frac{\pa\bar z_l}{\pa\bar w_\gb}b^{k\ga} g_{k\bar l}.
\end{eqnarray}
It remains to show that
\[
\frac{\sqrt{-1}}{2}\frac{\pa\bar z_l}{\pa\bar w_\gb}b^{k\ga} g_{k\bar l}dw_\ga\wedge d\bar w_\gb=\omega_0.
\]
Again, by Theorem \ref{equivmain}, we know $\phi_{\bar j}^ig_{i\bar l}=\phi_{\bar l}^ig_{i\bar j}$ and the above identity follows immediately from formula \eqref{aux20}.
\end{proof}

Now we look at the action of the automorphism group of $X_0$ on the Kuranishi space $B$. For the rest of this section, we assume $\omega_0$ is a \ke metric on $X_0$.

Let $G=\text{Isom}_0\lb X_0,\omega_0\rb$ be the isometry group with Lie algebra $\g$. By the work of Matsushima \cite{matsu57} and Calabi \cite{calabi2}, we know that the complexification $G^\C$ of $G$ is isomorphic to the holomorphic automorphism group $\text{Aut}_0\lb X_0\rb$ and we have $\g^{\C}\cong H^0\lb X_0, T^{1,0}X_0\rb$. Furthermore, if we let
\[
\Lambda_1^\R=\lfb f\in C^\infty\lb X_0,\R\rb\mid \lb \Delta_0+1\rb f=0\rfb
\]
be the first eigenspace of the Laplacian on $X_0$ and let $\Lambda_1^\C=\Lambda_1^\R\otimes_\R \C$, then we have
\begin{eqnarray}\label{lieagid10}
\g\cong \lfb \text{Im}\lb \nabla_0^{1,0} f\rb\big | f\in \Lambda_1^\R\rfb
\end{eqnarray}
and
\begin{eqnarray}\label{lieagid20}
\g^\C\cong \lfb  \nabla_0^{1,0} f \big | f\in \Lambda_1^\C\rfb.
\end{eqnarray}

The diffeomorphism group of $X$ acts on the space of complex structures on $X$ via pullback and thus acts locally on the set of Beltrami differentials on $X_0$ which satisfy the obstruction equation \eqref{integ}. Let $D\subset\text{Diff}_0\lb X\rb$ be a neighborhood of the identity map and let $Y=\lb X,J\rb$ be a complex manifold obtained by deforming the complex structure $J_0$ via $\phi\in A^{0,1}\lb X_0, T^{1,0}X_0\rb$. We assume $\Vert\phi\Vert$ is small and $\sigma\in D$. In \cite {ku65} Kuranishi showed that the Beltrami differential $\psi=\phi\circ\sigma$ corresponding to the complex structure $\sigma^* J$ is characterized by
\begin{equation}\label{belact}
\frac{\pa\sigma_k}{\pa\bar z_j}+\phi_{\bar l}^k\lb \sigma(z)\rb \frac{\pa\bar\sigma_l}{\pa\bar z_j}=\psi_{\bar j}^i \lb
\frac{\pa\sigma_k}{\pa z_i}+\phi_{\bar l}^k \lb \sigma(z)\rb \frac{\pa\bar\sigma_l}{\pa z_i}\rb,
\end{equation}
where $z_1,\cdots,z_n$ are local holomorphic coordinates on $X_0$. It follows that
\begin{corollary}\label{spact}
If $\sigma\in \text{Aut}_0\lb X_0\rb$ is a biholomorphism of $X_0$ then $\phi\circ\sigma=\sigma^*\phi$. If $\sigma\in \text{Aut}_0\lb Y\rb$ is a biholomorphism of $Y$ then $\phi\circ\sigma=\phi$.
\end{corollary}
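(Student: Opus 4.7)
The plan is to read off both claims directly from Kuranishi's transformation formula \eqref{belact}, which by construction characterizes $\psi=\phi\circ\sigma$ as the Beltrami differential of the pulled-back complex structure $\sigma^*J$.

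For the first claim, I would substitute the Cauchy-Riemann equations for a $J_0$-holomorphic diffeomorphism. If $\sigma\in\text{Aut}_0\lb X_0\rb$, then $\pa\sigma_k/\pa\bar z_j=0$ and $\pa\bar\sigma_l/\pa z_i=0$, so \eqref{belact} collapses to
\[
\phi_{\bar l}^k\lb\sigma(z)\rb\,\frac{\pa\bar\sigma_l}{\pa\bar z_j}=\psi_{\bar j}^i\,\frac{\pa\sigma_k}{\pa z_i}.
\]
Inverting the holomorphic Jacobian $\lsb\pa\sigma_k/\pa z_i\rsb$ then yields
\[
\psi_{\bar j}^i=\frac{\pa z_i}{\pa\sigma_k}\,\phi_{\bar l}^k\lb\sigma(z)\rb\,\frac{\pa\bar\sigma_l}{\pa\bar z_j},
\]
which is exactly the coordinate formula for the pullback $\sigma^*\phi$ of $\phi\in A^{0,1}\lb X_0,T^{1,0}X_0\rb$ under the biholomorphism $\sigma$ (the $d\bar z$ factor is pulled back by $\sigma$, while the $T^{1,0}$ factor is transported by $d\sigma^{-1}$). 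This identifies $\phi\circ\sigma$ with $\sigma^*\phi$.

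For the second claim, I would argue by uniqueness of the Beltrami representative. If $\sigma\in\text{Aut}_0\lb Y\rb$ is a biholomorphism of $Y=\lb X,J\rb$, then $\sigma^*J=J$, so the complex structure $\sigma^*J$ is represented by the same Beltrami differential as $J$, namely $\phi$ itself. Since small Beltrami differentials correspond bijectively to integrable complex structures near $J_0$, this forces $\psi=\phi$, and hence $\phi\circ\sigma=\phi$ by Kuranishi's definition of the action. The entire argument is routine: the only mildly delicate step is keeping indices straight when identifying the simplified right-hand side of \eqref{belact} with the standard pullback formula, and no input beyond \eqref{belact} and the definitions of $\text{Aut}_0\lb X_0\rb$ and $\text{Aut}_0\lb Y\rb$ is needed.
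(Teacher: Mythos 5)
Your argument is correct and is exactly the derivation the paper intends: the paper offers no proof beyond asserting that the corollary ``follows'' from Kuranishi's formula \eqref{belact}, and your substitution of the Cauchy--Riemann equations for $\sigma$ into \eqref{belact} (for the first claim) together with the uniqueness of the Beltrami representative of a complex structure near $J_0$ (for the second) supplies precisely the missing details. The index bookkeeping identifying the simplified relation with the standard pullback $\sigma^*\phi$ is right, so there is nothing to add.
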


Now we assume that $\sigma\in G\cap D$ is an isometry of $\lb X_0,\omega_0\rb$ and $\phi(t)$ is a solution of equation \eqref{maineq}. Then $\phi(t)\circ \sigma=\sigma^*\phi(t)$ satisfies the first two equations of \eqref{maineq} since $\sigma$ preserves $\omega_0$ and $J_0$. Thus, for each $t$ with $|t|$ small, $\sigma^*\phi(t)=\phi\lb t'\rb$ where $t'$ is characterized by $\sum_i t_i'\phi_i=\h\lb \sigma^*\phi(t)\rb$. Let $V=T_0^{1,0}B\cong H^{0,1}\lb X_0, T^{1,0}X_0\rb$. If we linearize the above action with respect to $\phi$, then we see that the linear action of $G$ on $T_0^{1,0}B$, denoted by $\rho:G\to GL(V)$, is given by
\begin{eqnarray}\label{repgp}
\rho\lb\sigma\rb\lb \lsb\phi\rsb\rb=\lsb \sigma^*\phi\rsb.
\end{eqnarray}
This is also true at the form level: $\sigma^*\phi$ is harmonic when $\sigma$ is an isometry and $\phi$ is harmonic.  The representation $\rho$ naturally extends to the representation $\rho^\C:G^\C\to GL(V)$ which is also given by \eqref{repgp}. Now we linearize the representation $\rho$ and we have the representation of Lie algebra $\rho_*:\g\to \text{End}(V)$ given by
\begin{eqnarray}\label{replag}
\rho_*\lb v\rb\lb \lsb\phi\rsb\rb=\lsb L_v\phi\rsb.
\end{eqnarray}
Again, this holds at the form level: $L_v\phi$ is harmonic when $v\in\g$ is a Killing field and $\phi$ is harmonic. This representation also extends to a representation $\rho_*^\C:\g^\C\to \text{End}\lb V\rb$.

\begin{remark}\label{actotal}
We note that, by the construction of Kuranishi family \eqref{backmfd} and \eqref{cxuniv}, both $G$ and $G^\C$ act on $\XX$ holomorphically.
\end{remark}
Note that if $v\in H^0\lb X_0, T^{1,0}X_0\rb$ is a holomorphic vector field and $\phi,\psi\in \h^{0,1}\lb X_0, T^{1,0}X_0\rb$ are harmonic Beltrami differentials, then by direct computations we have
\begin{align}\label{aux10}
\begin{split}
 L_v\phi=&\lsb v,\phi\rsb,\\
 L_{\bar v}\phi=&\bar\pa \lb \bar v\lrcorner\phi\rb,\\
 \lsb v,\phi\rsb\cdot\bar\psi=v\lb\phi\cdot\bar\psi\rb-&\dd\lb \lb v\lrcorner\bar\psi\rb\lrcorner\phi\rb+\lb v\lrcorner\bar\psi\rb\lrcorner\lb\dd\phi\rb.
\end{split}
\end{align}

Now we look at the representation $\rho_*:\g\to \text{End}\lb V\rb$. Let $\xi\in\g$ be a Killing field. By the identification \eqref{lieagid20}, there exists a unique eigenfunction $f\in \Lambda_1^\R$ such that $\xi=\text{Im}(v)$, where $$v=\nabla^{1,0}f\in H^0\lb X_0, T^{1,0}X_0\rb.$$ For any harmonic Beltrami differentials $\phi,\psi\in \h^{0,1}\lb X_0, T^{1,0}X_0\rb$, we have
\[
\langle L_\xi \phi,\psi\rangle_{L^2}=\frac{1}{2\sqrt{-1}}\lb \langle L_v \phi,\psi\rangle_{L^2}-\langle L_{\bar v} \phi,\psi\rangle_{L^2}\rb.
\]
By \eqref{aux10}, we know that
\[
\langle L_{\bar v} \phi,\psi\rangle_{L^2}=\int_{X_0} \lb \bar\pa \lb \bar v\lrcorner\phi\rb \rb\cdot\bar\psi\ dV_g=0
\]
since $\psi$ is harmonic. By integration by parts and Theorem \ref{equivmain}, we have
\begin{align*}
\begin{split}
\langle L_v \phi,\psi\rangle_{L^2}=& \int_{X_0} \lb v\lb\phi\cdot\bar\psi\rb-\dd\lb \lb v\lrcorner\bar\psi\rb\lrcorner\phi\rb+\lb v\lrcorner\bar\psi\rb\lrcorner\lb\dd\phi\rb\rb\ dV_g\\
=& \int_{X_0}  v\lb\phi\cdot\bar\psi\rb \ dV_g = \int_{X_0}\lb \dd v\rb \lb\phi\cdot\bar\psi\rb \ dV_g\\
=& - \int_{X_0}f \lb\phi\cdot\bar\psi\rb \ dV_g.
\end{split}
\end{align*}
This implies
\[
\langle L_\xi \phi,\psi\rangle_{L^2}=\frac{\sqrt{-1}}{2}\int_{X_0}f \lb\phi\cdot\bar\psi\rb \ dV_g.
\]
Let
\begin{eqnarray}\label{spaceq}
Q=\lfb \phi\cdot\bar\psi\mid \phi,\psi\in \h^{0,1}\lb X_0, T^{1,0}X_0\rb\rfb\subset C^\infty\lb X_0\rb.
\end{eqnarray}
Since $L_\xi \phi$ is harmonic, we know that $L_\xi \phi=0$ if and only if $f\bot_{L^2} Q$.

In conclusion, we have proved the following
\begin{corollary}\label{trivm}
The representation $\rho_*$ is trivial (and thus $\rho_*^\C$, $\rho$ and $\rho^\C$ are trivial) if and only if $\Lambda_1^\R\bot_{L^2} Q$ (and thus $\Lambda_1^\C\bot_{L^2} Q$).
\end{corollary}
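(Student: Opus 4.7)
The proof is essentially a packaging of the computation carried out in the lines immediately preceding the statement, together with two routine integrations from the Lie-algebra representation to the group representations. Here is how I would organize the plan.

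\medskip

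\textbf{Step 1: reduce vanishing of $\rho_*(\xi)[\phi]$ to vanishing of $L_\xi \phi$.} First I would observe that whenever $\xi \in \g$ is a Killing field and $\phi \in \h^{0,1}(X_0, T^{1,0}X_0)$ is a harmonic Beltrami differential, the Lie derivative $L_\xi \phi$ is again harmonic. This is because the flow of $\xi$ consists of isometries of $(X_0,\omega_0)$ and biholomorphisms of $J_0$, hence commutes with the Hodge Laplacian and with the harmonic projection. Consequently $\rho_*(\xi)[\phi] = [L_\xi \phi]$ equals $0$ in $V$ if and only if $L_\xi \phi = 0$ pointwise, which in turn is equivalent to $\langle L_\xi \phi, \psi\rangle_{L^2} = 0$ for all harmonic $\psi$.

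\medskip

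\textbf{Step 2: apply the integral identity.} The displayed identity
\[
\langle L_\xi \phi,\psi\rangle_{L^2}=\frac{\sqrt{-1}}{2}\int_{X_0}f\,(\phi\cdot\bar\psi)\,dV_g,
\]
established just above the statement using Theorem \ref{equivmain} together with \eqref{aux10}, then shows that $\rho_*(\xi) = 0$ on all of $V$ precisely when $\int_{X_0} f\,q\,dV_g = 0$ for every $q \in Q$, where $\xi = \mathrm{Im}(\nabla_0^{1,0} f)$ and $f \in \Lambda_1^\R$. Ranging $\xi$ over all of $\g$ corresponds, via the isomorphism \eqref{lieagid10}, to ranging $f$ over $\Lambda_1^\R$, which yields the stated equivalence $\rho_* \equiv 0 \iff \Lambda_1^\R \bot_{L^2} Q$.

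\medskip

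\textbf{Step 3: extend to $\rho_*^\C$, $\rho$, and $\rho^\C$.} Complexifying the identity \eqref{lieagid20} and repeating the calculation of Step 2 with $v = \nabla_0^{1,0} f$ in place of $\xi$ (the $L_{\bar v}\phi$ term again contributes nothing against harmonic $\psi$, by \eqref{aux10}) gives $\langle L_v \phi, \psi\rangle_{L^2} = -\int_{X_0} f\,(\phi\cdot\bar\psi)\,dV_g$ for $f \in \Lambda_1^\C$, so triviality of $\rho_*^\C$ on $\g^\C$ amounts to $\Lambda_1^\C \bot_{L^2} Q$. The equivalence $\Lambda_1^\R \bot_{L^2} Q \iff \Lambda_1^\C \bot_{L^2} Q$ is immediate from $\Lambda_1^\C = \Lambda_1^\R \otimes_\R \C$ and $\C$-linearity of the $L^2$ pairing in its first argument. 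Finally, since $G$ and $G^\C$ are connected (both being identity components), triviality of the differentials $\rho_*$, $\rho_*^\C$ is equivalent to triviality of $\rho$, $\rho^\C$ by the exponential map.

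\medskip

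There is no genuine obstacle here: once one has Theorem \ref{equivmain}, which permits the integration-by-parts manipulation that collapses $\langle L_v \phi,\psi\rangle_{L^2}$ to $-\int_{X_0} f\,(\phi\cdot\bar\psi)\,dV_g$, the corollary is a formal consequence. The only points requiring care are (i) the harmonicity of $L_\xi \phi$, which uses that $\xi$ is a Killing field of the \ke metric, and (ii) the bookkeeping between $\Lambda_1^\R$ and $\Lambda_1^\C$, which is purely algebraic.
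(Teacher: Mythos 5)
Your proposal is correct and follows essentially the same route as the paper: the paper's own justification is exactly the computation preceding the statement, namely that $L_\xi\phi$ is harmonic, that $\langle L_{\bar v}\phi,\psi\rangle_{L^2}=0$, and that $\langle L_\xi\phi,\psi\rangle_{L^2}=\frac{\sqrt{-1}}{2}\int_{X_0}f\lb\phi\cdot\bar\psi\rb\,dV_g$, so that $\rho_*=0$ if and only if $\Lambda_1^\R\bot_{L^2}Q$. Your Steps 1--3 merely make explicit the harmonicity reduction and the passage to $\rho_*^\C$, $\rho$, $\rho^\C$, which the paper leaves implicit.
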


\section{Small Deformation of Fano \ke Manifolds}\label{defoke}

Throughout this section, we assume $\lb X_0, \omega_0\rb$ is a Fano \ke manifold and denote by $\lb \XX, B,\pi\rb$ the Kuranishi family with respect to $\omega_0$ as constructed in Section \ref{kudiv}. An important question concerning the geometry of the moduli space of $X_0$ is the existence of \ke metrics on small deformations of $\lb X_0, \omega_0\rb$. By using the implicit function theorem, Koiso \cite{koiso1} showed that any small deformation of $X_0$ admits a \ke metric, provided the automorphism group of $X_0$ is discrete. The case that $X_0$ has non-trivial holomorphic vector fields is much more delicate. In \cite{szeke2010} Szekelyhidi showed that a small deformation of a cscK manifold admits a cscK metric if and only if it is $K$-polystable. A similar result was established by Br\"onnle \cite{bro} in terms of the polystability of the action of the automorphism group on the Kuranishi space.  Later, it was proved by Chen, Donaldson and Sun \cite{cds1, cds2, cds3} that the existence of a \ke metric on a Fano manifold $X$ is equivalent to the $K$-stability of $X$.
However, it is highly nontrivial to check the $K$-stability of a Fano manifold in general.

In this section, we provide new and simple necessary and sufficient conditions on the existence of \ke metrics on small deformations of $X_0$ as stated in Theorem 1.1.

\begin{theorem}\label{fkemain}
Let $\lb X_0,\omega_0\rb$ be a Fano \ke manifold and let $\lb \XX, B,\pi\rb$ be the Kuranishi family with respect to $\omega_0$. By shrinking $B$ if necessary, the following statements are equivalent:

\smallskip
\begin{enumerate}
\item $X_t$ admits a \ke metric for each $t\in B$;
\smallskip

\item The dimension $h^0\lb X_t, T^{1,0}X_t\rb$ of the space of holomorphic vector fields on $X_t$ is independent of $t$ for all $t\in B$;

\smallskip

\item The automorphism group $\text{Aut}_0\lb X_t\rb$ is isomorphic to $\text{Aut}_0\lb X_0\rb$ for all $t\in B$.
\end{enumerate}
\end{theorem}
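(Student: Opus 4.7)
The plan is to establish the cycle $(3)\Rightarrow(2)\Rightarrow(1)\Rightarrow(3)$. The implication $(3)\Rightarrow(2)$ is immediate, since $H^0(X_t,T^{1,0}X_t)$ is the Lie algebra of $\text{Aut}_0(X_t)$.

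The analytical heart is $(2)\Rightarrow(1)$. By Corollary \ref{riccipotential}, the Ricci potential of $(X_t,\omega_0)$ is the explicit function $h_t=\log\det(I-\phi(t)\bar\phi(t))$. Seeking a K\"ahler-Einstein metric $\omega_t=\omega_0+\sqrt{-1}\partial_t\bar\partial_t u$ on $X_t$ in the class $[\omega_0]$ reduces to solving the complex Monge-Amp\`ere equation
$$
(\omega_0+\sqrt{-1}\partial_t\bar\partial_t u)^n=e^{h_t-u}\omega_0^n.
$$
Its linearization at $u=0$, $t=0$ is $\Delta_0+I$, whose kernel is the first eigenspace $\Lambda_1^\R$ identified with $\g$ via \eqref{lieagid10}. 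I will apply the implicit function theorem on the closed complement $(\Lambda_1^\R)^\perp\subset C^\infty(X_0,\R)$, producing a smooth $u(t)\in(\Lambda_1^\R)^\perp$ that solves the Monge-Amp\`ere equation modulo $\Lambda_1^\R$. The residual obstruction is the $L^2$-projection of the equation onto $\Lambda_1^\R$; using the Taylor expansion $h_t=-\phi(t)\cdot\bar\phi(t)+O(|t|^4)$, this obstruction vanishes to leading order precisely when $\Lambda_1^\R\perp_{L^2}Q$, where $Q$ is the space defined in \eqref{spaceq}.

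To extract the orthogonality $\Lambda_1^\R\perp_{L^2}Q$ from hypothesis $(2)$, I appeal to Corollary \ref{trivm}: this orthogonality is equivalent to triviality of the representation $\rho^\C$ of $G^\C=\text{Aut}_0(X_0)$ on $V=T_0 B$. Under $(2)$, constancy of $h^0(X_t)$ makes $\pi_*T^{1,0}_{\XX/B}$ locally free of rank $\dim_\C G^\C$, and combining the holomorphic $G^\C$-equivariance from Remark \ref{actotal} with the upper semicontinuity of $h^0(X_t)$ and the embedding $G^\C_t\hookrightarrow\text{Aut}_0(X_t)$, one deduces that $G^\C$ must act trivially on $B$. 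The higher-order obstructions to the Monge-Amp\`ere equation are then handled inductively along the Kuranishi power series \eqref{power10}, propagating the orthogonality order by order using the same identities as in the proof of Theorem \ref{equivmain}.

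For $(1)\Rightarrow(3)$: given a KE metric on each $X_t$, Bando-Mabuchi uniqueness combined with the implicit function theorem of the previous step yields a smooth family $\omega_t$ of KE metrics. The isometry groups $K_t$ vary smoothly and $\text{Aut}_0(X_t)\cong K_t^\C$ by Matsushima's theorem. Constancy of $\dim K_t$, together with rigidity of the semisimple part of a reductive complex Lie algebra, forces $\text{Aut}_0(X_t)\cong\text{Aut}_0(X_0)$. The hardest step I anticipate is justifying rigorously that constancy of $h^0$ implies the triviality of $\rho^\C$ on $B$, together with the all-orders vanishing of the $\Lambda_1^\R$-projected Monge-Amp\`ere obstruction in $(2)\Rightarrow(1)$.
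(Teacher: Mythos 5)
Your proposal has genuine gaps at the two places you yourself flag as hardest, and at a third you do not flag. For $(2)\Rightarrow(1)$: the deduction that constancy of $h^0\lb X_t,T^{1,0}X_t\rb$ forces $\rho^\C$ to be trivial is asserted, not proved --- local freeness of $\pi_*T^{1,0}_{\XX/B}$ plus $G^\C$-equivariance plus semicontinuity does not by itself rule out a nontrivial action on $B$ (a nontrivial action with isomorphic fibers along orbits is perfectly consistent with constant $h^0$). The paper's actual mechanism is concrete: extend a basis $v_i$ of $H^0\lb X_0,T^{1,0}X_0\rb$ holomorphically in $t$ (Kodaira's stability theorem, which is where constancy of $h^0$ enters), transport back by the isomorphism $\tau_t$, and compute $\bar\pa_0\wt v_i(t)=-\lsb\wt v_i(t),\phi(t)\rsb$; differentiating at $t=0$ shows $\lsb v_i,\phi_k\rsb$ is $\bar\pa_0$-exact, so $\rho_*$ vanishes. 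Second, your plan to kill the $\Lambda_1^\R$-projected Monge--Amp\`ere obstruction ``order by order'' is exactly the step that does not follow from $\Lambda_1^\R\perp_{L^2}Q$: that orthogonality only kills the leading term, and higher-order obstructions involve the higher coefficients of $u(t)$ and $h_t$. The paper sidesteps this entirely: once the isometry group is shown not to jump, LeBrun--Simanca/Rollin--Simanca--Tipler produce an \emph{extremal} metric on each $X_t$, and then the explicit Ricci potential $h_t=\log\det\lb I-\phi(t)\bar{\phi(t)}\rb$ of Corollary \ref{riccipotential}, being invariant under $\text{Aut}_0\lb X_t\rb$ by Corollary \ref{spact}, makes the Futaki invariant vanish \emph{exactly}, so the extremal metric is K\"ahler--Einstein. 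That is the device that replaces your all-orders induction.

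For $(1)\Rightarrow(3)$ your argument also does not close. Constancy of $\dim K_t$ is not available --- it is essentially statement $(2)$, i.e.\ the thing being proved --- since $h^0\lb X_t,T^{1,0}X_t\rb$ is only upper semicontinuous and can drop; and the appeal to rigidity of the semisimple part cannot control the center of the reductive algebra $H^0\lb X_0,T^{1,0}X_0\rb$ (the paper's own remark after Theorem \ref{ip10} points out exactly this failure). The paper instead proves $\text{Isom}_0\lb X_0,\omega_0\rb=\text{Isom}_0\lb X_t,\omega_0\rb$ as literal subgroups of $\text{Diff}(X)$ by showing the $G^\C$-action on $B$ is trivial: a nontrivial $\C^*$-subaction would produce a nontrivial test configuration for $X_{t'}$ with smooth central fiber $X_0$ and vanishing Futaki invariant, contradicting the K-polystability of the K\"ahler--Einstein manifold $X_{t'}$ (Tian, Berman). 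That K-stability input, which your sketch never invokes, is the missing idea; without it neither the triviality of the action in $(1)\Rightarrow(3)$ nor a correct replacement for your $K_t$-argument is in place.
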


\begin{proof}
Firstly, we assume that $X_t$ admits \ke metrics for each $t\in B$. By Remark \ref{generalgauge}, we know that $\omega_0$ defines a \ka metric on $X_t$. We shall show that
\begin{eqnarray}\label{key100}
\text{Isom}_0\lb X_0,\omega_0\rb=\text{Isom}_0\lb X_t, \omega_0\rb
\end{eqnarray}
for each $t\in B$. Once we have this, then statements (2) and (3) follow from the upper semi-continuity of $h^0\lb X_t, T^{1,0}X_t\rb$ as a function of $t$ \cite{ks57}. Indeed, after shrinking $B$ we can assume that $h^0\lb X_t,T^{1,0}X_t\rb\leq h^0\lb X_0,T^{1,0}X_0\rb$ for all $t\in B$. We know that $\lb \text{Isom}_0\lb X_t, \omega_0\rb\rb^\C$ is a subgroup of $\text{Aut}_0\lb X_t\rb$ for each $t$ and $\lb\text{Isom}_0\lb X_0,\omega_0\rb\rb^\C\cong \text{Aut}_0\lb X_0\rb$. Since
\begin{align*}
\begin{split}
\dim_\R \text{Isom}_0\lb X_t,\omega_0\rb\leq &\dim_\C \text{Aut}_0\lb X_t\rb =h^0\lb X_t,T^{1,0}X_t\rb \\
\leq & h^0\lb X_0,T^{1,0}X_0\rb =\dim_\C \text{Aut}_0\lb X_0\rb \\
=& \dim_\R \text{Isom}_0\lb X_0,\omega_0\rb,
\end{split}
\end{align*}
identity \eqref{key100} would imply that $h^0\lb X_t,T^{1,0}X_t\rb= h^0\lb X_0,T^{1,0}X_0\rb$ and $\lb \text{Isom}_0\lb X_t, \omega_0\rb\rb^\C\cong\text{Aut}_0\lb X_t\rb$ for all $t\in B$. It then follows that $\text{Aut}_0\lb X_t\rb\cong \text{Aut}_0\lb X_0\rb$ for all $t\in B$ since they are complexifications of the same compact Lie group.

To prove \eqref{key100}, it suffices to show that each isometry $\sigma\in G= \text{Isom}_0\lb X_0,\omega_0\rb$, viewed as a diffeomorphism of $X$, is also an isometry of $\lb X_t,\omega_0\rb$. This will give us a natural embedding
\begin{eqnarray}\label{key200}
G\hookrightarrow \text{Isom}_0\lb X_t,\omega_0\rb
\end{eqnarray}
and  \eqref{key100} follows from the dimensional reason as above.

By the discussion in Section \ref{kudiv}, since each isometry $\sigma\in G$ preserves both the \ke metric $\omega_0$ and the complex structure $J_0$, it preserves all operators which are canonically associated to $\omega_0$ and $J_0$. Hence, such $\sigma$ maps each solution of equation \eqref{maineq} to another solution since these solutions are given by the Kuranishi equation \eqref{kueq}. On the other hand, since a solution $\phi(t)$ of equation \eqref{kueq} is determined by its harmonic part $\mathbb H\lb \phi(t)\rb=\sum_{i=1}^m t_i\phi_i$, it is enough to show that the action of $G$ (or $G^\C$) on $B$, as described in Section 2, is trivial. Indeed, this would imply  that, for each $t\in B$ and $\sigma\in G$, we have $\sigma^*\lb \sum_{i=1}^m t_i\phi_i\rb= \sum_{i=1}^m t_i\phi_i$ and thus, by the uniqueness of solution of the Kuranishi equation \eqref{kueq}, we have $\sigma^*\phi(t)=\phi(t)$ which implies $\sigma$ preserves the complex structure $J_t$. Since $\sigma$ preserves $\omega_0$, we conclude that $\sigma\in \text{Isom}_0\lb X_t,\omega_0\rb$.

It remains to show that the action of $G^\C$ on $B$ is trivial. Let us denote by
\[
V=T_0^{1,0}B\cong H^{0,1}\lb X_0, T^{1,0}X_0\rb
\]
as before. If the action of $G^\C$ on $B$ is nontrivial, then there exists a subgroup $\lambda:\C^*\to G^\C$ whose action on $V$ is nontrivial. We can then pick a basis $e_1,\cdots,e_m$ of $V$ such that
$$\lambda(s)\lb e_i\rb=s^{\kappa_i}e_i, \quad s\in \C^*$$  with $\kappa_i\in\Z$ for each $i$. It follows that at least one of the $\kappa_i$'s is nonzero. Replacing $\lambda$ by $\lambda^{-1}$ if necessary, we can assume $\kappa_i>0$ for some $i$. Let
\[
\Delta_\eps=\lfb \lb 0,\cdots,0,t_i,0,\cdots,0 \rb\big | \left | t_i\right |<\eps\rfb \subset B
\]
and pick some $t'\in \Delta_\eps^*=\Delta_\eps\setminus \lfb 0\rfb$.

Let $\XX'=\XX\mid_{\Delta_\eps}$ and consider the subfamily $\lb \XX', \Delta_\eps,\pi\rb$ with an action of $H=\lfb s\in C^*\mid |s|<1\rfb$ on $\Delta_\eps$ given by $\lambda(s)(t)=s^{\kappa_i}t$.
We note that $X_t$ is biholomorphic to $X_{t'}$ if $t\ne 0$ because of the action of $H$. Furthermore, $X_0$ is not biholomorphic to $X_{t'}$. To see this, we note that, by Theorem \ref{versal}, the Kodaira-Spencer map $KS_t: T_t^{1,0}B\to H^{0,1}\lb X_t, T^{1,0}X_t\rb$ is an isomorphism if $t=0$, and is surjective if $t\ne 0$. The above argument shows that the deformation of $X_{t'}$ is trivial along at least one direction due to the action of $\C^*$. Thus
\[
h^{0,1}\lb X_{t'}, T^{1,0}X_{t'}\rb< h^{0,1}\lb X_{0}, T^{1,0}X_{0}\rb.
\]
This shows that $X_0$ is not biholomorphic to $X_{t'}$. By Remark \ref{actotal}, we also get the action of $H$ on $\XX'$. The family $\lb \XX',\Delta_\eps,\pi\rb$ naturally extends to a family $\lb \XX'',\C,\pi\rb$ with a $\C^*$ action on the base $\C$ with weight $\kappa_i$ and the corresponding action on $\XX''$. By the standard argument of base change, we can assume $\kappa_i=1$ and we get a nontrivial test configuration of $\lb X_{t'}, K_{ X_{t'}}^{-k}\rb$, where the $\C^*$ action on the line bundle $K_{\XX''/\C}^{-k}$ is the induced one. Since $X_{t'}$ admits a \ke metric, it is $K$-polystable (\cite{tian97, ber2016}). Now the central fiber of the nontrivial test configuration $\lb \XX'',\C,\pi\rb$ is $X_0$, which also admits a \ke metric and thus the Futaki invariant is $0$. This is a contradiction, thus statements (2) and (3) hold.

Conversely, it is obvious that statement (3) implies (2), so it remains to show that (2) implies (1), namely if the dimension $h^0\lb X_t, T^{1,0}X_t\rb$ of the space of holomorphic vector fields on $X_t$ is independent of $t$:
\[h^0\lb X_t, T^{1,0}X_t\rb=h^0\lb X_0, T^{1,0}X_0\rb=l \quad \text{for all} \ t\in B, \]
then each $X_t$ admits a \ke metric. Pick a basis $\lfb v_1,\cdots,v_l\rfb$ of $H^0 \lb X_0, T^{1,0}X_0\rb$. By the above assumption and the work of Kodaira \cite{kobook}, we can extend each $v_i$ to $v_i(t)\in A^0\lb X_0, T^{\C}X_0\rb$ such that $v_i(t)\in H^0\lb X_t, T^{1,0}X_t\rb$ and $v_i(t)$ depends on $t$ holomorphically. By continuity, and by shrinking $B$ if necessary, we know that $\lfb v_1(t),\cdots,v_l(t)\rfb$ span $H^0\lb X_t, T^{1,0}X_t\rb$ for each $t\in B$.

Now we define a map
\[
\tau_t:A^0 \lb X_0, T^{1,0}X_0\rb\to A^0 \lb X_t, T^{1,0}X_t\rb
\]
by
\[
\tau_t(v)=\lb I-\phi(t)\bar{\phi(t)}\rb^{-1}(v)-\bar{\phi(t)}\lb \lb I-\phi(t)\bar{\phi(t)}\rb^{-1}(v)\rb.
\]
Then $\tau_t$ is a linear isomorphism for each $t\in B$. Let $\wt v_i(t)=\tau_t^{-1}\lb v_i(t)\rb$. Since $\bar\pa_t v_i(t)=0$, a direct computation shows that
\begin{eqnarray}\label{30}
\bar\pa_0\wt v_i(t)=-\lsb \wt v_i(t),\phi(t)\rsb.
\end{eqnarray}
Since $\phi(0)=0$, we have
\begin{align*}
\begin{split}
\bar\pa_0\lb \frac{\pa}{\pa t_k}\bigg |_{t=0}\wt v_i(t)\rb = & \frac{\pa}{\pa t_k}\bigg |_{t=0}\bar\pa_0\wt v_i(t)
= -\frac{\pa}{\pa t_k}\bigg |_{t=0}\lsb \wt v_i(t),\phi(t)\rsb
= -\lsb v_i,\phi_k\rsb.
\end{split}
\end{align*}
This implies that the cohomology class $\lsb \lsb v_i,\phi_k\rsb\rsb=0$ for all $1\leq i\leq l$ and $1\leq k\leq m$. Thus, by (\ref{aux10}), the action of the Lie algebra $\g$ on $H^{0,1}\lb X_0, T^{1,0}X_0\rb$ given by (\ref{replag}) is trivial which implies that the $G$-action (\ref{repgp}) on $T_0^{1,0}B\cong H^{0,1}\lb X_0, T^{1,0}X_0\rb$ is trivial. By the previous arguments, we have the identification $\text{Isom}_0\lb X_0,\omega_0\rb=\text{Isom}_0\lb X_t,\omega_0\rb$ for each $t\in B$.

We can now restrict our attention to $G$-invariant \ka potentials and apply the implicit function theorem as in \cite{lesi1994, rst1, szeke2010, bro} (which can be further traced back, e.g., to the work of Donaldson-Kronheimer \cite{dk1990}). More specifically, by the work in \cite{lesi1994} (see also Corollary 1  in  \cite{rst1}), the above identification  $\text{Isom}_0\lb X_0,\omega_0\rb=\text{Isom}_0\lb X_t,\omega_0\rb$ leads to the existence of an extremal metric on each $X_t$. On the other hand,  by Corollary \ref{riccipotential}, we know that $h_t=\log\det\lb I-\phi(t)\bar{\phi(t)}\rb$ is a Ricci potential of $\lb X_t,\omega_0\rb$. It follows from Corollary \ref{spact} that each $\sigma\in\text{Aut}_0\lb X_t\rb$ preserves $\phi(t)$, hence the Ricci potential $h_t$ is a $\sigma$-invariant function. Thus, for each $\xi\in H^0\lb X_t, T^{1,0}X_t\rb$, we have $\xi\lb h_t\rb=0$ and the vanishing Futaki invariant \cite{futaki}:
\[
f_{X_t}\lb \omega_0,\xi\rb=\int_{X_t}\xi\lb h_t\rb\frac{\omega_0^n}{n!}=0.
\]
Therefore,  the extremal metric on $X_{t}$ must be a K\"ahler-Einstein metric. This proves statement (1) and concludes the proof of Theorem \ref{fkemain}.
\end{proof}

\begin{remark}\label{multiway}
As discussed in \cite{csyz2019}, under any of the equivalent conditions in Theorem \ref{fkemain},  any \ke metric $\omega_0$ on $X_0$ can be extended to a smooth family $\lfb\omega_t\rfb_{t\in B}$ such that $\omega_t$ is a \ke metric on $X_t$ for each $t\in B$.
\end{remark}

\begin{remark}
Szekelyhidi \cite{szeke2010} showed that if $X'$ is a sufficiently small deformation of a Fano K\"ahler-Einstein manifold $X$,  then either $X'$ admits a K\"ahler-Einstein metric or there is a test configuration for $X'$ with smooth central fibre $X''$.  Moreover,  $X''$ admits a K\"ahler-Einstein metric and it is itself a small deformation of $X$. Combining this result of Szekelyhidi and the assumption that $h^0\lb X_t, T^{1,0}X_t\rb$ is independent of $t$, one can give an alternative proof of $``$(2) $\Longrightarrow$ (1)$"$ in Theorem \ref{fkemain}.
\end{remark}

Theorem \ref{fkemain} and Theorem \ref{versal} immediately imply the following universal property of the Kuranishi family.
\begin{corollary}\label{gpnojp}
Let $\lb X_0,\omega_0\rb$ be a Fano \ke manifold and let $\lb \XX, B,\pi\rb$ be the Kuranishi family with respect to $\omega_0$. If $X_t$ admits a \ke metric for each $t\in B$, then the family $\lb \XX, B,\pi\rb$ is universal at each $t$.
\end{corollary}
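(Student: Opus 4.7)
The plan is to deduce Corollary \ref{gpnojp} as a direct consequence of the two results already in hand: Theorem \ref{fkemain} (the new characterization of the existence of \ke metrics on small deformations) and part (4) of Theorem \ref{versal} (Wavrik's criterion for universality in terms of the constancy of $h^0\lb X_t, T^{1,0}X_t\rb$).

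First I would invoke the hypothesis that every fiber $X_t$ admits a \ke metric and feed this into the implication $(1)\Longrightarrow (2)$ of Theorem \ref{fkemain}. This immediately gives that the dimension $h^0\lb X_t, T^{1,0}X_t\rb$ of the space of holomorphic vector fields is constant in $t\in B$. With this constancy established, I would then apply part (4) of Theorem \ref{versal} verbatim: when $h^0\lb X_t, T^{1,0}X_t\rb$ is constant on $B$, the Kuranishi family $\lb \XX, B,\pi\rb$ is universal at each $t\in B$, not merely at the central fiber $0$.

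Since both ingredients are already proved, there is essentially no obstacle to overcome; the main point is simply to observe that Theorem \ref{fkemain} supplies precisely the hypothesis required by Wavrik's upgrade from semi-universality at $0$ to universality at every $t$. One may optionally shrink $B$ if needed so that the equivalent conditions of Theorem \ref{fkemain} apply uniformly, although by the formulation of that theorem this shrinking is already built in. This completes the proof.
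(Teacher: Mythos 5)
Your proposal is correct and matches the paper's own argument exactly: the corollary is stated as an immediate consequence of Theorem \ref{fkemain} (which upgrades the existence of \ke metrics on all fibers to the constancy of $h^0\lb X_t, T^{1,0}X_t\rb$) combined with part (4) of Theorem \ref{versal}. Nothing further is needed.
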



\section{Curvature of the $L^2$ Metrics on Direct Image Sheaves}\label{wpmse}

The \wpm metric is a $L^2$ metric on the parameter space of a family of complex manifolds which admit certain canonical metrics. It was first introduced by Weil to study the moduli spaces of hyperbolic Riemann surfaces based on the Petersson pairing. See, e.g.,  \cite{csyz2019} for a brief survey on certain aspects of the \wpm metric.

In general, we consider a complex analytic family $\lb \mathcal Y, D,p\rb$ of compact complex manifolds, where $D\subset \C^m$ is the parameter space, and we let $Y_s=p^{-1}(s)$ for each point $s\in D$. If we assume that each fiber $Y_s$ admits a \ke metric $\omega_s$, then we can define the \wpm metric  in the following way. For any $s\in D$ and $u,v\in T_s^{1,0}D$, we let $\phi,\psi\in \h^{0,1}\lb Y_s, T^{1,0}Y_s\rb$ be the harmonic representatives of the Kodaira-Spencer classes $KS_s(u)$ and $KS_s(v)$ respectively, where we use the chosen \ke metric $\omega_s$ on $Y_s$ to determine $\phi$ and $\psi$. Then the \wpm metric  $\omega_{_{WP}}$ is given by
\begin{eqnarray}\label{wpdefn}
h_s\lb u,v\rb=\int_{Y_s}\langle \phi,\psi\rangle_{\omega_s}\frac{\omega_s^n}{n!}.
\end{eqnarray}

When $Y_s$ is a \ke manifold of general type or a polarized Calabi-Yau manifold, there is a unique \ke metric on $Y_s$. Therefore, in this case, $\phi$ and $\psi$ are uniquely determined and the \wpm metric is well-defined. Furthermore, for any submanifold $D'\subset D$, when we consider the restricted family $\lb \mathcal Y\mid_{D'}, D',p\rb$, the \wpm metric on $D'$ defined by \eqref{wpdefn} is just the restriction of the \wpm metric on $D$ to $D'$. In fact, one can define the canonical $L^2$ metric on $H^{0,1}\lb Y_s, T^{1,0}Y_s\rb$ in this case by using the unique \ke metric on $Y_s$, even when there are obstructions on deforming the complex structure on $Y_s$. This generalization of the classical \wpm metric plays an important role in studying the moduli space of $Y_s$.


Let $\lb X_0,\omega_0\rb$ be a Fano \ke manifold and let $\lb \XX, B,\pi\rb$ be the Kuranishi family constructed in Section \ref{kudiv}. We assume that each $X_t$ admits a \ke metric. Then, by Theorem \ref{fkemain} and Remark \ref{multiway}, we know that each \ke metric on $X_0$ can be extended to a smooth family of \ke metrics. In this case it is not hard to show that the \wpm metric is well-defined, namely it is independent of the choice of \ke metrics on each $X_t$. 
In fact, following the classical approach, if $\{\omega(s)\}$ is  any family of \ke metrics on $X_0$ and let $\phi_s$ and $\psi_s$ be harmonic representatives of any two given Kodaira-Spencer classes with respect to $\omega(s)$. Then a simple computation show that
\[\frac{d}{ds}\bigg |_{s=0}\langle \phi_s,\psi_s\rangle_{L^2\lb \omega(s)\rb}=0\] for all choices of $\lfb\omega(s)\rfb$ if and only if $\lfb \phi_i\bar\phi_j\rfb \bot_{L^2\lb\omega(0)\rb}\Lambda_1^\R\lb\omega(0)\rb$ and the latter condition  is guaranteed by Theorem \ref{fkemain} and Corollary \ref{trivm}.

Now we turn our attention to the approximation of the \wpm metric.
Given a smooth family $\lb\XX, B,\pi\rb$ of \ke manifolds of general type, it was shown in \cite{sundeform1} that the Ricci curvatures of the $L^2$ metrics, induced by the fiberwise \ke metrics on the direct image bundle $R^0\pi_* K_{\XX/B}^k$, converge to the \wpm metric after an appropriate normalization. There are two steps involved in establishing the curvature formula of the $L^2$ metrics. The first step is to extend sections in $H^0\lb X_0, K_{X_0}^k\rb$ to $H^0\lb X_t, K_{X_t}^k\rb$ in a canonical way in order to obtain local holomorphic sections of $R^0\pi_* K_{\XX/B}^k$. We note that the background smooth pair of $\lb X_t, K_{X_t}\rb$ is independent of $t$.
In \cite{sundeform1},  a slight different notion inspired by the work in Todorov \cite{tod1} was used.
This technique can be directly applied to more general situations, e.g.,  a family $\lb\mathcal Y, D,p\rb$ of compact complex manifolds and a relative ample holomorphic line bundle $\mathcal L$ over $\mathcal Y$. In \cite{sundeform2}, this idea was used to construct local holomorphic sections of the bundle over $D$ whose fiber at $s\in D$ is $H^0\lb Y_s, \lb \mathcal L^k\mid_{Y_s}\rb\rb$. The second step is to find deformation of the \ke metrics with respect to the Kuranishi-divergence gauge.

It turns out that similar results hold in our situation if we replace the relative canonical bundle used in \cite{sundeform1} by the relative anti-canonical bundle.
Let $\lb X_0,\omega_0\rb$ be a Fano \ke manifold and let $\lb\XX,B,\pi\rb$ be the Kuranishi family with respect to $\omega_0$.  Note that for any positive integer $k$, by the Serre duality and Kodaira vanishing theorems, we have
\begin{eqnarray}\label{vanish10}
h^i\lb X_t, K_{X_t}^{-k}\rb=h^{n-i}\lb X_t, K_{X_t}^{k+1}\rb=0
\end{eqnarray}
for all $1\leq i\leq n$ since $K_{X_t}^{k+1}$ is negative. Thus, by the Riemann-Roch theorem, we know that $h^0\lb X_t, K_{X_t}^{-k}\rb$ remain constant for all $t\in B$. This implies that the direct image sheaf $R^0\pi_* K_{\XX/B}^{-k}$ is a holomorphic vector bundle over $B$ . We denote this bundle by $E_k$ and its rank by $N_k$.

Similar to the work in \cite{tod1}, we define the linear map $\sigma_t:A^0\lb X_0, K_{X_0}^{-k}\rb\to A^0\lb X_t, K_{X_t}^{-k}\rb$ by
\begin{eqnarray}\label{ext10}
\sigma_t(s)=\lb \det\lb I-\phi(t)\bar{\phi(t)}\rb\rb^{-k}\lb s^{\frac 1k}\lrcorner e^{-\bar{\phi(t)}}\rb^k.
\end{eqnarray}
It is easy to see that $\sigma_t$ is well-defined and is an isomorphism if $|t|$ is small. Furthermore, a direct computation shows that $\sigma_t(s)$ is a holomorphic section of $K_{X_t}^{-k}$ if and only if
\begin{eqnarray}\label{obs10}
\bar\pa_0 s=\phi(t)\lrcorner\nabla_0 s,
\end{eqnarray}
where $\nabla_0$ is the metric connection on $K_{X_0}^{-k}$ induced by the \ke metric on $X_0$. By using \eqref{vanish10}, equation \eqref{obs10} can be solved inductively. Indeed, given any holomorphic section $s\in H^0\lb X_0, K_{X_0}^{-k}\rb$, we look for a power series solution
\begin{eqnarray}\label{hol10}
s(t)=s+\sum_{|I|\geq 1}s_{_I} t^I\in A^0\lb X_0, K_{X_0}^{-k}\rb
\end{eqnarray}
to equation \eqref{obs10} with normalization $\h_0\lb s(t)\rb=s$. By induction, it is not hard to see that
\begin{eqnarray}\label{hol20}
s_{_I}=\bar\pa_0^* G_0\lb \sum_{J+K=I}\phi_{_J}\lrcorner \nabla_0 s_{_K}\rb.
\end{eqnarray}
Furthermore, standard elliptic estimates imply that the power series \eqref{hol10} converges in any $C^{p,\ga}$ norm when $t$ is sufficiently small. Similar to the work in \cite{sundeform1}, we have

\begin{theorem}\label{holext}
For any holomorphic section $s\in H^0\lb X_0, K_{X_0}^{-k}\rb$, the power series solution \eqref{hol10} satisfies $\h_0\lb s(t)\rb=s$ and $\sigma_t\lb s(t)\rb\in H^0\lb X_t, K_{X_t}^{-k}\rb$ for each $t\in B$.
Furthermore, by shrinking $B$ if necessary, if $\lfb s_i\rfb_{1\leq i\leq N_k}\subset H^0\lb X_0, K_{X_0}^{-k}\rb$ is a basis then $\lfb \sigma_t\lb s_i(t)\rb\rfb_{1\leq i\leq N_k}\subset H^0\lb X_t, K_{X_t}^{-k}\rb$ is also a basis for all $t\in B$.
\end{theorem}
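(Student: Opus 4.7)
The plan is to follow the Todorov-type construction from \cite{tod1, sundeform1}, now adapted to the relative anti-canonical bundle. First I would verify the recursion formally: expanding the holomorphicity condition $\bar\pa_0 s(t) = \phi(t)\lrcorner \nabla_0 s(t)$ in powers of $t$ and matching coefficients at multi-index $I$ gives
\[
\bar\pa_0 s_{_I} = \sum_{J+K=I}\phi_{_J}\lrcorner \nabla_0 s_{_K}.
\]
One then argues by induction on $|I|$ that the right-hand side is $\bar\pa_0$-closed. This uses (a) the integrability equation $\bar\pa_0\phi(t) = \tfrac{1}{2}[\phi(t),\phi(t)]$, which at the coefficient level reads $\bar\pa_0\phi_{_I}=\tfrac{1}{2}\sum_{J+K=I}[\phi_{_J},\phi_{_K}]$; (b) the inductive hypothesis on $\bar\pa_0 s_{_K}$ for $|K|<|I|$; and (c) the commutator $[\bar\pa_0,\nabla_0]$ acting on sections of $K_{X_0}^{-k}$, which is a pointwise curvature term. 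The vanishing $H^{0,1}\lb X_0, K_{X_0}^{-k}\rb=0$ from \eqref{vanish10} then ensures that the $\bar\pa_0$-harmonic part of the right-hand side is zero, so the Hodge decomposition produces the canonical solution \eqref{hol20}.

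Next, convergence of the formal series \eqref{hol10} should follow from standard elliptic estimates. Since $\bar\pa_0^* G_0$ is bounded on $C^{p,\alpha}$-spaces and $\phi(t)$ converges in such a norm on $B$ by the Kuranishi construction, a majorant argument analogous to the one used for $\phi(t)$ itself produces convergence of $s(t)$ on a possibly smaller ball, uniformly in the starting section $s$. Once $s(t)$ is constructed, $\sigma_t\lb s(t)\rb\in H^0\lb X_t, K_{X_t}^{-k}\rb$ by the defining property of $\sigma_t$ recorded in \eqref{obs10}. The normalization $\h_0\lb s(t)\rb = s$ is automatic: every term $s_{_I}$ with $|I|\ge 1$ lies in the image of $\bar\pa_0^*$, which is $L^2$-orthogonal to the harmonic projection.

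For the final statement, I would observe that $\sigma_t$ is a pointwise linear isomorphism on sections for $|t|$ small, since $\sigma_0$ is the identity and $\sigma_t$ depends continuously on $t$. Thus the composition $s\mapsto \sigma_t\lb s(t)\rb$ is a continuous family of linear maps from $H^0\lb X_0, K_{X_0}^{-k}\rb$ into $H^0\lb X_t, K_{X_t}^{-k}\rb$; both spaces have the same dimension $N_k$ by \eqref{vanish10} and the Riemann-Roch argument already noted. Since this composition is the identity at $t=0$, it remains an isomorphism on a perhaps smaller ball, so a basis is sent to a basis. The step I expect to require the most care is the inductive closedness check in the first paragraph: one must commute $\bar\pa_0$ past $\nabla_0$ on $K_{X_0}^{-k}$-valued objects and combine the Kodaira-Spencer integrability identity with the inductive data so that the resulting cohomology obstruction sits inside $H^{0,1}\lb X_0, K_{X_0}^{-k}\rb$, which then vanishes by \eqref{vanish10}.
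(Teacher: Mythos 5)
Your overall architecture is the same as the paper's: reduce holomorphicity of $\sigma_t\lb s(t)\rb$ to the recursion $\bar\pa_0 s_{_I}=\sum_{J+K=I}\phi_{_J}\lrcorner\nabla_0 s_{_K}$, solve it by $\bar\pa_0^*G_0$ using the vanishing \eqref{vanish10}, get convergence from elliptic estimates, and deduce the basis statement from the constancy of $N_k$ together with $\sigma_t$ being an isomorphism for small $|t|$. Those parts are fine. There is, however, a genuine gap at exactly the step you single out as delicate, and your proposed way around it does not work. Commuting $\bar\pa_0$ past $\nabla_0$ on $K_{X_0}^{-k}$-valued objects produces the curvature of $K_{X_0}^{-k}$, which by the K\"ahler--Einstein condition is a multiple of $\omega_0$; schematically (the paper records the first-order instance in the proof of Theorem \ref{quanmain})
\[
\bar\pa_0\lb \phi_{_J}\lrcorner\nabla_0 s_{_K}\rb=\lb\bar\pa_0\phi_{_J}\rb\lrcorner\nabla_0 s_{_K}+\phi_{_J}\lrcorner\nabla_0\lb\bar\pa_0 s_{_K}\rb-2k\sqrt{-1}\lb \phi_{_J}\lrcorner\omega_0\rb\otimes s_{_K}.
\]
After summing over $J+K=I$, the first two families of terms cancel via the integrability equation and the symmetry of $\nabla_0^2$ on a line bundle, as you indicate. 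The third term is \emph{not} a cohomological obstruction: it is a pointwise tensor, and if it does not vanish identically then the right-hand side of the recursion is simply not $\bar\pa_0$-closed, so the vanishing of $H^{0,1}\lb X_0,K_{X_0}^{-k}\rb$ is of no help, since that only guarantees solvability of $\bar\pa_0 s_{_I}=\eta$ for $\eta$ already closed. The term vanishes only because of Theorem \ref{equivmain}: for the Kuranishi family of a Fano K\"ahler--Einstein manifold the Kuranishi gauge coincides with the divergence gauge and $\phi(t)\lrcorner\omega_0=0$, hence $\phi_{_I}\lrcorner\omega_0=0$ for every coefficient. Your proposal never invokes this compatibility, and without it the induction does not close; your phrase ``the resulting cohomology obstruction sits inside $H^{0,1}\lb X_0,K_{X_0}^{-k}\rb$'' misidentifies where the difficulty lies. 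This is the same mechanism behind the remark following the theorem, where $\phi_{_J}\lrcorner\nabla_0 s_{_K}=\dd_0\lb\phi_{_J}\otimes s_{_K}\rb$ uses $\dd_0\phi_{_J}=0$. The remaining ingredients of your argument (the normalization $\h_0\lb s(t)\rb=s$, convergence, and the basis statement) agree with the paper.
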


\begin{remark}
A direct computation shows that
\begin{align*}
\begin{split}
\phi_{_J}\lrcorner\nabla_0 s_{_K}= & \dd_0\lb \phi_{_J}\otimes s_{_K}\rb -\lb\dd_0 \phi_{_J}\rb\otimes s_{_K}\\
 =& \dd_0\lb \phi_{_J}\otimes s_{_K}\rb,
\end{split}
\end{align*}
where the last equality follows from Theorem \ref{equivmain}. Thus formula \eqref{hol20} is equivalent to
\begin{eqnarray}\label{hol50}
s_{_I}=\bar\pa_0^* G_0\lb \sum_{J+K=I}\dd_0\lb \phi_{_J}\otimes s_{_K}\rb\rb.
\end{eqnarray}
\end{remark}

If we assume each $X_t$ admits a \ke metric $g_t$ with volume form $V_t$, then the $L^2$ metric $H_k(V)$ on
$E_k=R^0\pi_* K_{\XX/B}^{-k}$ is given by
\begin{eqnarray}
\langle s_1,s_2\rangle_{H_k(V)}=\int_{X_t} \langle s_1,s_2\rangle_{g_t^k}dV_t
\end{eqnarray}
for each $t\in B$ and $s_1, s_2\in H^0\lb X_t, K_{X_t}^{-k}\rb$, where $V=\lfb V_t\rfb_{t\in B}$ and $g_t^k$ is the metric on $K_{X_t}^{-k}$ induced by the \ke metric $g_t$ on $X_t$. It is clear that the $L^2$ metric $H_k(V)$ on $E_k$ depends on the choice of the smooth family of fiberwise \ke metrics.

In order to compute the curvature of the $L^2$ metric, we need the deformation formulas of $V_t$. By using the Kuranishi-divergence gauge,  we view each $X_t$ as the background smooth manifold $X$ equipped with the complex structure $J_t$ obtained by deforming the complex structure on $X_0$ via $\phi(t)$. Thus we can view $\lfb V_t\rfb_{t\in B}$ as families of differential forms on $X$. Similar to the work in \cite{sundeform1}, by deforming the corresponding Monge-Amp\'ere equation, we have

\begin{theorem}\label{kexp}
Let $\lb X_0,\omega_0\rb$ be a Fano \ke manifold with Kuranishi family $\lb \XX,B,\pi\rb$. We assume that each $X_t$ admits a \ke metric. Let $V=\lfb V_t\rfb$ be a smooth family of \ke volume forms and we write $V_t=e^\rho\det\lb I-\phi(t)\bar{\phi(t)}\rb V_0$ for some $\rho\in C^\infty \lb X_0,\R\rb$. Then $\rho$ has an expansion of the form
\begin{eqnarray}\label{33}
\rho=\sum_i t_i\rho_i+\sum_j\bar t_j\bar\rho_j+\sum_{i,j}t_i\bar t_j\rho_{i\bar j}+O\lb t_it_k\rb+O\lb \bar t_j\bar t_l\rb +O\lb |t|^3\rb,
\end{eqnarray}
where
\begin{enumerate}
\item $\lb \Delta_0+1\rb\rho_i=0$;
\item $\lb \Delta_0+1\rb\rho_{i\bar j}=\phi_i\cdot\bar\phi_j-g_0^{\ga\bar\gb}g_0^{\gm\bar\gd}\pa_\ga\pa_{\bar\gd}\rho_i\pa_\gm\pa_{\bar\gb}\bar\rho_j$.
\end{enumerate}
\end{theorem}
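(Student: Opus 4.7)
The plan is to convert the \ke condition $\rc\lb\omega_t\rb=\omega_t$ on each fiber $X_t$ into a scalar complex Monge--Amp\`ere equation for $\rho$ and then to expand it in powers of $t,\bar t$ at the origin. The factor $\det\lb I-\phi(t)\bar{\phi(t)}\rb$ in the ansatz is precisely $e^{h_t}$, the exponentiated Ricci potential of $\lb X_t,\omega_0\rb$ furnished by Corollary \ref{riccipotential}. By Theorem \ref{equivmain} the form $\omega_0$ is a genuine \ka form on each $X_t$, so the set-up is consistent, and $\rho$ will satisfy an equation in which $\omega_0$ plays the role of a reference metric.

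First I would derive the Monge--Amp\`ere equation satisfied by $\rho$. Applying $-\tfrac{\sqrt{-1}}{2}\pa_t\bar\pa_t\log(\cdot)$ to the identity $V_t=e^{\rho+h_t}V_0$ and using the \ke condition (which locally reads $\omega_t=-\tfrac{\sqrt{-1}}{2}\pa_t\bar\pa_t\log V_t$) together with Corollary \ref{riccipotential} (which controls $-\tfrac{\sqrt{-1}}{2}\pa_t\bar\pa_t\log V_0$) yields $\omega_t=\omega_0-\tfrac{\sqrt{-1}}{2}\pa_t\bar\pa_t\rho$ as $\lb 1,1\rb_t$-forms on $X_t$. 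Raising to the $n$-th power gives
\begin{equation*}
\lb\omega_0-\tfrac{\sqrt{-1}}{2}\pa_t\bar\pa_t\rho\rb^n=e^{\rho+h_t}\omega_0^n.
\end{equation*}

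Next I would Taylor-expand both sides at $t=0$. From $\log\det\lb I-\phi\bar\phi\rb=-\mathrm{tr}\lb\phi\bar\phi\rb+O\lb\lb\phi\bar\phi\rb^2\rb$ together with $\phi(t)=\sum t_i\phi_i+O\lb|t|^2\rb$ one reads off
\begin{equation*}
h_t=-\sum_{i,j}t_i\bar t_j\lb\phi_i\cdot\bar\phi_j\rb+O\lb|t|^3\rb,
\end{equation*}
so $h_t$ contributes only at the mixed quadratic order. On the left I would use $\bar\pa_t=\bar\pa_0-\phi(t)\lrcorner\pa_0$ and $\pa_t=\pa_0-\bar{\phi(t)}\lrcorner\bar\pa_0$ on functions, together with $\log\det(I+A)=\mathrm{tr}(A)-\tfrac12\mathrm{tr}(A^2)+\cdots$. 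Matching the coefficient of $t_i$ at $t=0$, where $\phi$ and $h_t$ are trivial to first order, collapses the equation to the linearization $-\Delta_0\rho_i=\rho_i$, i.e. $\lb\Delta_0+1\rb\rho_i=0$.

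The main obstacle is organizing the mixed second-order calculation. Three distinct sources contribute to the $t_i\bar t_j$-coefficient: (i) the $h_t$ expansion directly gives $-\lb\phi_i\cdot\bar\phi_j\rb$; (ii) the first-order variations of the operators $\pa_t$ and $\bar\pa_t$ act on the first-order piece $\sum_k t_k\rho_k+\sum_l\bar t_l\bar\rho_l$ of $\rho$, producing cross-terms of the shape $\pa_0\lb\phi_i\lrcorner\pa_0\bar\rho_j\rb$ and its conjugate; and (iii) the $-\tfrac12\mathrm{tr}(A^2)$ piece of $\log\det$ applied to the first-order Hessian $\pa_0\bar\pa_0\rho^{(1)}$ produces exactly the combination $-g_0^{\ga\bar\gb}g_0^{\gm\bar\gd}\pa_\ga\pa_{\bar\gd}\rho_i\,\pa_\gm\pa_{\bar\gb}\bar\rho_j$ after tracing with $\omega_0$. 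Theorem \ref{equivmain} is used crucially here: the divergence-gauge identities $\dd_0\phi_i=0$ and $\phi_i\lrcorner\omega_0=0$ are exactly what is needed to show that the cross-terms in (ii) cancel upon tracing, so that the left-hand side reduces cleanly to $\lb\Delta_0+1\rb\rho_{i\bar j}$. Combining (i) and (iii) then yields the stated equation for $\rho_{i\bar j}$.
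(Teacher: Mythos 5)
Your proposal is correct and takes essentially the same route as the paper, which itself only sketches this step by deferring to the Monge--Amp\`ere deformation argument of \cite{sundeform1}: use Corollary \ref{riccipotential} to reduce the fiberwise \ke condition to $\lb\omega_0-\tfrac{\sqrt{-1}}{2}\pa_t\bar\pa_t\rho\rb^n=e^{\rho+h_t}\omega_0^n$ and Taylor-expand. Two details worth recording when you write it up: the cancellation of your cross-terms (ii) uses, besides $\dd_0\phi_i=0$ and $\phi_i\lrcorner\omega_0=0$, the vanishing of the $(0,2)$-Hessian of $\rho_i$ (equivalently, that $\nabla_0^{1,0}\rho_i$ is holomorphic, which follows from part (1) and \eqref{lieagid20}); and the $\rho_i\bar\rho_j$ term coming from the quadratic part of $e^{\rho}$ is cancelled by the $\tfrac12\lb\mathrm{tr}\rb^2$ term in the determinant expansion, leaving exactly the stated equation for $\rho_{i\bar j}$.
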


\begin{remark}
Since the \ka forms $\{\omega_t$\} of the \ke metrics $\{g_t\}$ are given by $\omega_t=-\pa_t\bar\pa_t \log V_t$, formula \eqref{33} also leads to the expansion of the \ke \ka forms $\{\omega_t\}$. Furthermore, we can also eliminate both the $t_i t_k$ and $\bar t_j\bar t_l$ terms in the expansion \eqref{33} by modifying $V$ via a biholomorphism of $\XX$. However, we do not need these facts in the following discussion.
\end{remark}

To effectively compute the curvature of the $L^2$ metric $\langle \cdot,\cdot\rangle_{H_k\lb V\rb}$ for any chosen family of \ke volume forms $V$, we need to adjust the total space $\XX$ without altering the Kuranishi gauge. We shall consider a certain special  biholomorphism $F:\XX\to\XX$ which covers the identity map of $B$ and we let $F_t=F\mid_{X_t}\in \text{Aut}\lb X_t\rb$. By Corollary \ref{spact},  we know that $F_t$ preserves $\phi(t)$. Such a map $F$ would induce a biholomorphic bundle map $\wt F:E_k\to E_k$, which is indeed a Hermitian isometry $\wt F: \lb E_k, \langle \cdot,\cdot\rangle_{H_k\lb V\rb}\rb\to \lb E_k, \langle \cdot,\cdot\rangle_{H_k\lb \lb F^{-1}\rb^* V\rb}\rb$.

For any smooth family $V$ of \ke volume forms, we let $\rho=\rho^V$ be the function as in Theorem \ref{kexp}. The family $V$ is said to be normalized if $\rho_i^V=0$ for each $i$. We now construct the special biholomorphism $F$ of $\XX$, covering the identity map of $B$, such that $F^* V$ is normalized. For a given family $V$, by Theorem \ref{kexp} we know that $\rho_i^V \in\Lambda_1^\C$ is an eigenfunction of $\Delta_0+1$, hence $\mu_i=\nabla_0^{1,0}\rho_i^V\in H^0\lb X_0, T^{1,0}X_0\rb$ is a holomorphic vector field on $X_0$. Since we have assumed that each $X_t$ admits a \ke metric, by Theorem \ref{fkemain} we know that $h^0\lb X_0, T^{1,0}X_0\rb=h^0 \lb X_t, T^{1,0}X_t\rb$ for each $t\in B$. It follows from Kodaira's stability theorem that each $\mu_i$ can be extended to a family $\mu_i(t)$ of vector fields such that
\begin{enumerate}
\item[(i)] $\mu_i(t)\in H^0\lb X_t, T^{1,0}X_t\rb$ for each $t\in B$, and

\smallskip
\item [(ii)] $\mu_i(t)$ depends on $t$ holomorphically.
\end{enumerate}
We let $\mu(t)=\sum_i t_i\mu_i(t)\in H^0\lb \XX, T_{\XX/B}^{1,0}\rb$ and let $F$ be the time-one flow of $\mu(t)$. Since $\frac{\pa}{\pa t_i}\bigg |_{t=0}F=\mu_i$ and $\dd_0\mu_i=-\Delta_0\rho_i^V=\rho_i^V$, it follows from direct computations that $F^*V$ is normalized. Thus, to compute the curvature of $\lb E_k, \langle \cdot,\cdot\rangle_{H_k\lb V\rb}\rb$, we can always assume that $V$ is normalized. In this case, it follows from Theorem \ref{kexp} and $\rho_i^V=0$ that $\lb \Delta_0+1\rb\rho_{i\bar j}^V=\phi_i\cdot\bar\phi_j$. We denote by $\lb \Delta_0+1\rb^{-1}\lb \phi_i\cdot\bar\phi_j\rb$ the unique solution of this equation which is perpendicular to $\Lambda_1^\C$. It then follows that
\begin{eqnarray}\label{secord}
\rho_{i\bar j}^V=\lb \Delta_0+1\rb^{-1}\lb \phi_i\cdot\bar\phi_j\rb+\nu_{i\bar j}^V
\end{eqnarray}
for some $\nu_{i\bar j}^V\in \Lambda_1^\C$.

The above discussion leads to the approximation of the Weil-Petersson metric $\omega_{_{WP}}$ on the parameter sapce $B$ by the Ricci curvatures of the $L^2$ metrics. Such approximations can be seen via the Knudsen-Mumford expansion \cite{km1976, zhang96, prs08} and the work of Schumacher \cite{schu1993}. Here we give a simple and direct proof. Moreover, our method gives the curvature tensor of the $L^2$ metrics on the direct image sheaves rather than their determinant bundles. 

In the following, we will use $\Box_0$ to denote the Hodge Laplacian on bundles over $X_0$ with respect to metrics induced by the \ke metric $\omega_0$.

\begin{theorem}\label{quanmain}
Let $\lb X_0,\omega_0\rb$ be a Fano \ke manifold with Kuranishi family $\lb \XX,B,\pi\rb$. We assume that each $X_t$ admits a \ke metric. Let $\lfb s_{_\ga}\rfb\subset H^0\lb X_0, K_{X_0}^{-k}\rb$ be a basis, $V$ be a smooth family of \ke volume forms, and $\rc_k=\rc \lb E_k, H_k\lb V\rb\rb$. Then
\begin{eqnarray}\label{quanmain20}
\lim_{k\to\infty}\frac{\pi^n}{k^{n+1}}\rc_k=-\omega_{_{WP}}.
\end{eqnarray}
\end{theorem}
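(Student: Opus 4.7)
The plan is to compute the Ricci curvature of $\lb E_k, H_k(V)\rb$ by hand using the canonical local holomorphic frame $\lfb \sigma_t\lb s_\ga(t)\rb\rfb_{1\leq \ga\leq N_k}$ provided by Theorem \ref{holext}, and then to extract the $k\to\infty$ asymptotic via the Tian--Yau--Zelditch expansion of the Bergman kernel. After replacing $V$ by $F^*V$ for the flow $F$ constructed right after Theorem \ref{kexp}, I may assume $V$ is normalized, so that $\rho^V_i=0$ and $\rho^V_{i\bar j}=\lb\Delta_0+1\rb^{-1}\lb\phi_i\cdot\bar\phi_j\rb+\nu^V_{i\bar j}$ with $\nu^V_{i\bar j}\in\Lambda_1^\C$.

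First, using \eqref{ext10} together with the deformation formula $V_t=e^\rho \det\lb I-\phi(t)\bar{\phi(t)}\rb V_0$, I pull the pairing $H_k(V)_{\ga\bar\gb}(t)$ back to an integral over the fixed background manifold $X_0$, so that the $t$-dependence is explicit through $\phi(t)$, $s_\ga(t)$, and $\rho$. Differentiating at $t=0$ using $\phi(0)=0$, the fact that $s_\ga(t)$ is a power series in $t$ alone so that $\bar\pa_{\bar t_j}s_\ga\equiv 0$, the formula $\pa_{t_i}s_\ga|_{t=0}=\bar\pa_0^*G_0\lb\phi_i\lrcorner\nabla_0 s_\ga\rb$ from \eqref{hol20}, and the normalized expansion of $\rho$ from Theorem \ref{kexp}, one obtains closed integral expressions for both $\pa_i\bar\pa_j H_k(V)_{\ga\bar\gb}(0)$ and $\pa_i H_k(V)_{\ga\bar\gb}(0)$, and hence, via $\lb\rc_k\rb_{i\bar j}=-\pa_i\bar\pa_j\log\det H_k(V)$, an exact integral formula for the Ricci form at $t=0$.

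In that formula, the dominant $k^{n+1}$ contribution arises from the factor of $k$ produced by differentiating the fiber norm $|\cdot|^2_{g_t^k}$, which yields the Ricci potential term $k\rho^V_{i\bar j}$ in the integrand. Applying the Tian--Yau--Zelditch expansion
\[
\sum_{\ga,\gb}H_k(V)^{\bar\gb\ga}(0)\,\langle s_\ga,s_\gb\rangle_{g_0^k}(x)=\frac{k^n}{\pi^n}\lb 1+O\lb k^{-1}\rb\rb
\]
uniformly on $X_0$ and integrating, this dominant piece reduces to $\frac{k^{n+1}}{\pi^n}\int_{X_0}\rho^V_{i\bar j}\,V_0$ to leading order. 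Since $\int_{X_0}\nu^V_{i\bar j}\,V_0=0$ (as $\Delta_0 f=-f$ forces $\int_{X_0}fV_0=0$ for $f\in\Lambda_1^\C$) and $\int_{X_0}\lb\Delta_0+1\rb^{-1}\lb\phi_i\cdot\bar\phi_j\rb V_0=\int_{X_0}\phi_i\cdot\bar\phi_j\,V_0$, one obtains $\lim_{k\to\infty}\frac{\pi^n}{k^{n+1}}\lb\rc_k\rb_{i\bar j}=-\int_{X_0}\phi_i\cdot\bar\phi_j\,V_0=-\lb\omega_{_{WP}}\rb_{i\bar j}$.

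The main technical obstacle is to verify that all remaining contributions to $\rc_k$ grow at most like $O(k^n)$. This includes the connection-squared piece of the Chern curvature coming from $\pa_i H_k\cdot\bar\pa_j H_k$, the cross terms generated by the variation $\pa_{t_i}s_\ga=\bar\pa_0^*G_0\lb\phi_i\lrcorner\nabla_0 s_\ga\rb$ of the holomorphic sections, and the higher derivatives of $\det\lb I-\phi(t)\bar{\phi(t)}\rb$. Controlling these requires repeated integration by parts using the Kuranishi-divergence gauge $\dd_0\phi_i=0$ from Theorem \ref{equivmain}, the $\bar\pa_0$-harmonicity of the basis $\lfb s_\ga\rfb$, and off-diagonal decay estimates for the Bergman kernel. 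A second subtle point is to eliminate any ambiguity coming from the free eigenfunction $\nu^V_{i\bar j}\in\Lambda_1^\C$; this is precisely where the hypothesis that each $X_t$ admits a \ke metric enters via Corollary \ref{trivm}, which through Theorem \ref{fkemain} forces $\Lambda_1^\C\bot_{L^2} Q$ and, in turn, makes the limit independent of the choice of smooth family of \ke volume forms $V$.
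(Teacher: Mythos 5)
Your overall architecture is the paper's: normalize $V$ by the time-one flow $F$ so that $\rho_i^V=0$, compute $H_k(V)$ in the canonical frame $\lfb\sigma_t\lb s_\ga(t)\rb\rfb$ of Theorem \ref{holext}, identify the dominant contribution as the $(k+1)\rho^V_{i\bar j}$ term coming from the weight $e^{(k+1)\rho}$ in the pulled-back pairing, and evaluate it with the Bergman kernel expansion together with $\int_{X_0}\nu^V_{i\bar j}\,dV_0=0$ and the self-adjointness of $\lb\Delta_0+1\rb^{-1}$. That part is correct and coincides with \eqref{curv40} and the final limit computation in the paper.

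The gap is the step you defer as ``the main technical obstacle'': you never actually show that the remaining contributions are $o(k^{n+1})$, and the tools you propose point in the wrong direction --- off-diagonal Bergman kernel decay is not needed, and integration by parts alone will not close the estimate. What the paper does is: (i) observe that $\pa_{t_i}h_{\ga\bar\gb}(0)=0$ (formula \eqref{firstord}), because $\pa_{t_i}s_{_\ga}|_{t=0}=\bar\pa_0^*G_0\,\dd_0\lb\phi_i\otimes s_{_\ga}\rb$ is $L^2$-orthogonal to holomorphic sections and $\rho_i^V=0$; this kills the connection-squared term at $t=0$ outright, so the curvature is just $-\pa_i\pa_{\bar j}h_{\ga\bar\gb}(0)$. (ii) Compute the cross term exactly: since $\bar\pa_0 G_0\dd_0\lb\phi_i\otimes s_{_\ga}\rb=0$, one gets
\[
\dd_0^*G_0\dd_0\lb\phi_i\otimes s_{_\ga}\rb=\phi_i\otimes s_{_\ga}-\lb k+1\rb\lb\Box_0+k+1\rb^{-1}\lb\phi_i\otimes s_{_\ga}\rb,
\]
so the term $\langle\phi_i\otimes s_{_\ga},\phi_j\otimes s_{_\gb}\rangle_{L^2}$ it produces cancels \emph{exactly} against the $-\phi_i\cdot\bar\phi_j$ contribution from $\pa_i\pa_{\bar j}\det\lb I-\phi(t)\bar{\phi(t)}\rb$, leaving the closed formula \eqref{quanmain10}. (iii) Bound the surviving remainder by the spectral gap $\Box_0+k+1\ge k+1$, which after tracing over an orthonormal basis gives
\[
0\le\lb k+1\rb\sum_\ga\langle\lb\Box_0+k+1\rb^{-1}\lb\phi_1\otimes s_{_\ga}\rb,\phi_1\otimes s_{_\ga}\rangle_{L^2}\le\int_{X_0}\tau_k|\phi_1|^2\,dV_0=O\lb k^n\rb.
\]
The exact cancellation in (ii) is not visible from crude size estimates of the individual terms you list, and without (ii) and (iii) your outline does not yet yield the theorem.
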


\begin{proof}

By the above discussion,  we can assume that $V$ is normalized. Let $\nu_{i\bar j}=\nu_{i\bar j}^V$ be the function given by equation \eqref{secord}. We first show that the curvature tensor of the $L^2$ metric $H_k(V)$ on $E_k$ is given by
\begin{align}\label{quanmain10}
\begin{split}
R_{\ga\bar\gb i\bar j}(0)= & \lb k+1\rb \int_{X_0}\langle \lb \Box_0+k+1\rb^{-1}\lb \phi_i\otimes s_{_\ga}\rb, \phi_j\otimes s_{_\gb}\rangle_{g_0} dV_0\\
-& \lb k+1\rb \int_{X_0}\langle s_{_\ga}, s_{_\gb}\rangle_{g_0^k}\lb \lb \Delta_0+1\rb^{-1}\lb \phi_i\cdot\bar\phi_j\rb+\nu_{i\bar j}\rb dV_0.
\end{split}
\end{align}
To prove this formula, since the curvature of $H_k$ is tensorial, we can use the local sections of $E_k$ constructed in Theorem \ref{holext} to compute it. For each $s_{_\ga}$, let $s_{_\ga}(t)\subset A^0\lb X_0, K_{X_0}^{-k}\rb$ be the sections constructed by formulas \eqref{hol10} and \eqref{hol20} and let $h_{\ga\bar\gb}(t)=\langle \sigma_t \lb s_{_\ga}(t)\rb, \sigma_t \lb s_{_\gb}(t)\rb\rangle_{H_k\lb V\rb}$. By Theorems \ref{holext} and \ref{kexp}, we have
\[
h_{\ga\bar\gb}(t)=\int_{X_0}\langle s_{_\ga}(t), s_{_\gb}(t)\rangle_{g_0^k}e^{\lb k+1\rb\rho}\det\lb I-\phi(t)\bar{\phi(t)}\rb dV_0,
\]
where $\rho$ is the function defined by
\[
V_t=e^\rho \det\lb I-\phi(t)\bar{\phi(t)}\rb V_0.
\]
Since $V$ is normalized, by formula \eqref{hol50}, we have
\begin{align}\label{firstord}
\begin{split}
\frac{\pa h_{\ga\bar\gb}}{\pa t_i}\bigg |_{t=0}=& \int_{X_0}\langle \bar\pa_0^* G_0 \dd_0\lb \phi_i\otimes s_{_\ga}\rb, s_{_\gb}\rangle_{g_0^k}dV_0 \\
=& \int_{X_0}\langle G_0 \dd_0\lb \phi_i\otimes s_{_\ga}\rb, \bar\pa_0 s_{_\gb}\rangle_{g_0^k}dV_0=0,
\end{split}
\end{align}
because $s_{_\gb}$ is holomorphic. Similarly, we have $\frac{\pa h_{\ga\bar\gb}}{\pa\bar t_j}\bigg |_{t=0}=0$ and
\begin{align}\label{curv10}
\begin{split}
\frac{\pa^2 h_{\ga\bar\gb}}{\pa t_i \pa\bar t_j}\bigg |_{t=0}= & \langle \bar\pa_0^* G_0 \dd_0\lb \phi_i\otimes s_{_\ga}\rb, \bar\pa_0^* G_0 \dd_0\lb \phi_j\otimes s_{_\gb}\rb\rangle_{L^2} \\
& +\int_{X_0}\langle s_{_\ga}, s_{_\gb}\rangle_{g_0^k} \lb \lb k+1\rb \lb \lb \Delta_0+1\rb^{-1}\lb \phi_i\cdot\bar\phi_j\rb+\nu_{i\bar j}\rb -\lb \phi_i\cdot\bar\phi_j\rb\rb dV_0.
\end{split}
\end{align}

Now we analyze the first term on the right hand side of the above formula. Note that, by the proof of Theorem \ref{holext}, we have
\begin{align*}
\begin{split}
\bar\pa_0\dd_0\lb\phi_i\otimes s_{_\ga}\rb=& \bar\pa_0 \lb\phi_i\lrcorner\nabla_0 s_{_\ga}\rb\\
= & \bar\pa_0\phi_i\lrcorner\nabla_0 s_{_\ga}-2k\sqrt{-1}\lb\phi_i\lrcorner\omega_0\rb\otimes s_{_\ga}=0.
\end{split}
\end{align*}
It follows that
\begin{eqnarray}\label{curv20}
\bar\pa_0 G_0\dd_0\lb\phi_i\otimes s_{_\ga}\rb=0.
\end{eqnarray}
Integrating by parts, we get
\begin{eqnarray*}
\langle \bar\pa_0^* G_0 \dd_0\lb \phi_i\otimes s_{_\ga}\rb, \bar\pa_0^* G_0 \dd_0\lb \phi_j\otimes s_{_\gb}\rb\rangle_{L^2}= \langle \dd_0^* G_0 \dd_0\lb \phi_i\otimes s_{_\ga}\rb, \phi_j\otimes s_{_\gb}\rangle_{L^2}.
\end{eqnarray*}
By using equation \eqref{curv20} and the fact that $\bar\pa_0\lb \phi_i\otimes s_{_\ga}\rb=0$, a simple computation shows that
\begin{align*}
\begin{split}
\dd_0^* G_0 \dd_0\lb \phi_i\otimes s_{_\ga}\rb =& \lb \Box_0+k+1\rb^{-1}\lb \dd_0^*\dd_0\lb \phi_i\otimes s_{_\ga}\rb\rb\\
=& \lb \Box_0+k+1\rb^{-1}\Box_0\lb \phi_i\otimes s_{_\ga}\rb\\
=& \phi_i\otimes s_{_\ga}-\lb k+1\rb \lb \Box_0+k+1\rb^{-1}\lb \phi_i\otimes s_{_\ga}\rb.
\end{split}
\end{align*}
Thus
\begin{align*}
\begin{split}
\langle \bar\pa_0^* & G_0 \dd_0\lb \phi_i\otimes s_{_\ga}\rb, \bar\pa_0^* G_0 \dd_0\lb \phi_j\otimes s_{_\gb}\rb\rangle_{L^2}\\
= & \langle \phi_i\otimes s_{_\ga}, \phi_j\otimes s_{_\gb}\rangle_{L^2}-
\lb k+1\rb \langle \lb \Box_0+k+1\rb^{-1}\lb \phi_i\otimes s_{_\ga}\rb, \phi_j\otimes s_{_\gb}\rangle_{L^2}.
\end{split}
\end{align*}
Inserting this into equation \eqref{curv10}, we get
\begin{align}\label{curv30}
\begin{split}
\frac{\pa^2 h_{\ga\bar\gb}}{\pa t_i \pa\bar t_j}\bigg |_{t=0}= & \lb k+1\rb \int_{X_0}\langle s_{_\ga}, s_{_\gb}\rangle_{g_0^k}\lb \lb \Delta_0+1\rb^{-1}\lb \phi_i\cdot\bar\phi_j\rb+\nu_{i\bar j}\rb dV_0\\
&-\lb k+1\rb \int_{X_0}\langle \lb \Box_0+k+1\rb^{-1}\lb \phi_i\otimes s_{_\ga}\rb, \phi_j\otimes s_{_\gb}\rangle_{g_0} dV_0.
\end{split}
\end{align}
The curvature formula \eqref{quanmain10} of the metric $H_k\lb V\rb$ now follows easily from the above formula and equation \eqref{firstord}.

To estimate the limit of Ricci curvatures, we take any vector $v\in T_0^{1,0}B$. By rotation and scaling, we can assume $v=\frac{\pa}{\pa t_1}$. Let $\lfb s_{_\ga}\rfb\subset H^0\lb X_0, K_{X_0}^{-k}\rb$ be an orthonormal basis with respect to the $L^2$ metric. By formula \eqref{quanmain10}, we have
\begin{align}\label{curv40}
\begin{split}
\frac{1}{k+1}\rc_k\lb v,v\rb= & \sum_\ga \int_{X_0} \langle \lb \Box_0+k+1\rb^{-1}\lb \phi_1\otimes s_{_\ga}\rb, \phi_1\otimes s_{_\ga}\rangle_{g_0} dV_0\\
& - \int_{X_0}\tau_k\lb\lb \Delta_0+1\rb^{-1}\lb |\phi_1|^2\rb+\nu_{1\bar 1}\rb dV_0,
\end{split}
\end{align}
where $\tau_k=\sum_\ga \Vert s_{_\ga}\Vert_{g_0^k}^2$ is the Bergman kernel function. Since the operator $\Box_0+k+1$ is self-adjoint and its first eigenvalue is at least $k+1$, we have
\begin{align*}
\begin{split}
0\leq & \sum_{\ga}\int_{X_0} \langle \lb \Box_0+k+1\rb^{-1}\lb \phi_1\otimes s_{_\ga}\rb, \phi_1\otimes s_{_\ga}\rangle_{g_0} dV_0\\
\leq & \sum_{\ga} \frac{1}{k+1}\int_{X_0} \langle \phi_1\otimes s_{_\ga}, \phi_1\otimes s_{_\ga}\rangle_{g_0} dV_0\\
=& \sum_{\ga}\frac{1}{k+1} \int_{X_0} |\phi_1|^2 \Vert s_{_\ga}\Vert_{g_0^k}^2 dV_0=\frac{1}{k+1} \int_{X_0} \tau_k |\phi_1|^2 dV_0.
\end{split}
\end{align*}
Combining the above inequality with equation \eqref{curv40}, we have
\begin{align}\label{curv50}
\begin{split}
0\leq & \frac{1}{k+1}\rc_k\lb v,v\rb  + \int_{X_0}\tau_k\lb \Delta_0+1\rb^{-1}\lb |\phi_1|^2\rb dV_0 \\
\leq & \frac{1}{k+1} \int_{X_0} \tau_k |\phi_1|^2 dV_0.
\end{split}
\end{align}
By the Bergman kernel expansion
\[
\tau_k=\frac{k^n}{\pi^n}+\frac{nk^{n-1}}{2\pi^n}+O\lb k^{n-2}\rb
\]
and the fact that 
\[\omega_{_{WP}}\lb v,v\rb=\int_{X_0}\lb \Delta_0+1\rb^{-1}\lb |\phi_1|^2\rb dV_0, \] 
we have
\begin{align*}
\begin{split}
&\lim_{k\to\infty}\frac{\pi^n}{k^n}\int_{X_0}\tau_k\lb \lb \Delta_0+1\rb^{-1}\lb |\phi_1|^2\rb+\nu_{1\bar 1}\rb dV_0 \\
=&\int_{X_0}\lb 1+\frac{n}{2k}+O\lb k^{-2}\rb\rb\lb\Delta_0+1\rb^{-1}\lb |\phi_1|^2\rb dV_0 + \lim_{k\to\infty}\frac{\pi^n}{k^n}\int_{X_0}\tau_k\nu_{1\bar 1} dV_0\\
=& \omega_{_{WP}}\lb v,v\rb- \lim_{k\to\infty}\int_{X_0}\lb 1+\frac{n}{2k}+O\lb k^{-2}\rb\rb\Delta_0\nu_{1\bar 1} dV_0\\
=& \omega_{_{WP}}\lb v,v\rb
\end{split}
\end{align*}
and
\[
\lim_{k\to\infty}\frac{\pi^n}{k^n} \lb \frac{1}{k+1} \int_{X_0} \tau_k |\phi_1|^2 dV_0\rb=0.
\]
Thus, \eqref{quanmain20} follows from inequality \eqref{curv50} and the above limits directly.
\end{proof}

\section{Plurisubharmonicity of Energy of Harmonic Maps}

Another application of the deformation of \ke metrics, such as Theorem \ref{kexp}, is the variation of energy of harmonic maps. In \cite{toledo12}, Toledo studied the harmonic maps from hyperbolic Riemann surfaces to a fixed Riemannian manifold $\lb N, h\rb$. For a Riemann surface $\Sigma$, fixing a homotopy class $A$ of continuous maps from $\Sigma$ to $N$ and assuming that the sectional curvature of $N$ is nonpositive, there exist smooth harmonic maps from $\Sigma$ to $N$ in the homotopy class $A$. Although such harmonic maps may not be unique, the energy depends only on the conformal structure of $\Sigma$, thus one obtains an energy function $E$ on the Teichm\"uller space $\mathcal T$ of $\Sigma$. Toledo showed that if one further assumes that the curvature of $N$ is Hermitian nonpositive, then $E$ is a plurisubharmonic function on $\mathcal T$.
Shortly after Toledo's work, Yau pointed out that such construction can be used to study the Teichm\"uller spaces of higher dimensional \ke manifolds and the plurisubharmonicity of the energy functions should hold in these cases. This was carried out in \cite{zhang} back in 2014.

Let $\lb X,\omega\rb$ be a \ka manifold with metric $g$ and let $\lb N,h\rb$ be a Riemannian manifold. To ensure the existence of harmonic maps, we assume that $N$ has nonpositive sectional curvature. A $W^{1,2}$-map $f:X\to N$ is harmonic if it minimizes the energy
\[ E(f)=\int_X \left |\pa f\right |^2\frac{\omega^n}{n!}\] in its homotopy class. In this case, $f$ is indeed smooth and satisfies the Euler-Lagrange equation
\begin{eqnarray}\label{harmp10}
\Delta f^\ga+\Gamma_{\gb\gm}^\ga\lb f\rb\frac{\pa f^\gb}{\pa z_i}\frac{\pa f^\gm}{\pa \bar z_j}g^{i\bar j}=0,
\end{eqnarray}
where $\Gamma_{\gb\gm}^\ga$ is the Christoffell symbol of $h$.
Furthermore, the Hopf differential of $f$ is the section
\[H(f)=\frac{\pa f^\ga}{\pa z_i}\frac{\pa f^\gb}{\pa z_k}h_{\ga\gb} dz_i\otimes dz_k\] of $S^2\Omega^{1,0}X$. The curvature of $\lb N,h\rb$ is Hermitian nonpositive if $R^N\lb u,v,\bar u, \bar v\rb\leq 0$ for each point $p\in N$ and all complex tangent vectors $u,v\in T_p^\C N$. If $f:X\to N$ is harmonic, then by using equation \eqref{harmp10} we have the Siu-Sampson identity
\begin{eqnarray}\label{ssid}
\dd\lb \dd\lb H\lb f\rb\rb\rb=-R_{\ga\gb\gm\gd}^N \frac{\pa f^\ga}{\pa z_i}\frac{\pa f^\gm}{\pa \bar z_j}\frac{\pa f^\gb}{\pa z_k}\frac{\pa f^\gd}{\pa \bar z_l}g^{i\bar j}g^{k\bar l}+\Vert \nabla^{1,0}\bar\pa f\Vert^2.
\end{eqnarray}
The following result was shown in Sampson \cite{sampson84}.

\begin{theorem}\label{ssva}
If the curvature of $\lb N,h\rb$ is Hermitian nonpositive and $f:X\to N$ is a harmonic map, then  $\nabla^{1,0}\bar\pa f=0$ and
\[ R_{\ga\gb\gm\gd}^N \frac{\pa f^\ga}{\pa z_i}\frac{\pa f^\gm}{\pa \bar z_j}\frac{\pa f^\gb}{\pa z_k}\frac{\pa f^\gd}{\pa \bar z_l}g^{i\bar j}g^{k\bar l}=0.\]
\end{theorem}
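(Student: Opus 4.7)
The plan is to integrate the Siu--Sampson identity \eqref{ssid} over the compact \ka manifold $X$ and exploit the sign of each term on the right-hand side. The left-hand side $\dd(\dd(H(f)))$ is an iterated divergence of a tensor field on a closed manifold, so its integral against the volume form vanishes by the divergence theorem. Thus one obtains
\[
0 \;=\; -\int_X R^N_{\ga\gb\gm\gd}\frac{\pa f^\ga}{\pa z_i}\frac{\pa f^\gm}{\pa \bar z_j}\frac{\pa f^\gb}{\pa z_k}\frac{\pa f^\gd}{\pa \bar z_l}g^{i\bar j}g^{k\bar l}\,\frac{\omega^n}{n!} \;+\; \int_X \Vert \nabla^{1,0}\bar\pa f\Vert^2\,\frac{\omega^n}{n!}.
\]
The second integrand is manifestly nonnegative as a squared norm, so it suffices to show the first integrand is pointwise nonnegative and then invoke the vanishing-of-nonnegative-integrals argument.

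The key step is a diagonalization in a unitary frame. At a point $p\in X$, choose a $g$-unitary basis $\{e_1,\dots,e_n\}$ of $T^{1,0}_p X$, so that $g^{i\bar j}=\delta^{ij}$ in this frame. Writing $A_i = df_{\C}(e_i)\in T_{f(p)}^\C N$, the curvature contraction at $p$ reduces to the double sum
\[
\sum_{i,k} R^N\lb A_i, A_k, \bar A_i, \bar A_k\rb,
\]
which is $\le 0$ term by term by the Hermitian nonpositivity hypothesis $R^N(u,v,\bar u,\bar v)\le 0$ applied with $u=A_i$, $v=A_k$. Therefore the curvature term in \eqref{ssid}, with its leading minus sign, is pointwise $\ge 0$.

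Now the identity above exhibits zero as the sum of two pointwise nonnegative integrals, so each integrand must vanish identically on $X$. The vanishing of $\Vert \nabla^{1,0}\bar\pa f\Vert^2$ gives $\nabla^{1,0}\bar\pa f = 0$ everywhere, and the vanishing of the curvature contraction gives the second conclusion. The main conceptual point (and the only nontrivial step once \eqref{ssid} is granted) is precisely the unitary-frame diagonalization that turns the mixed index contraction into a manifestly termwise nonpositive sum; without this observation the Hermitian nonpositivity hypothesis, which is a pointwise bound on a four-tensor evaluated on a pair of complex vectors, cannot be applied directly to the expression appearing in \eqref{ssid}.
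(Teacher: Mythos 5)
Your proof is correct and is essentially the argument the paper has in mind: the paper states the identity \eqref{ssid} immediately before the theorem and then cites Sampson \cite{sampson84}, whose classical proof is exactly this integration of \eqref{ssid} over the closed manifold combined with the unitary-frame diagonalization showing the curvature contraction is a termwise nonpositive sum $\sum_{i,k}R^N\lb A_i,A_k,\bar A_i,\bar A_k\rb$. Both steps, including the sign bookkeeping that makes the right-hand side of \eqref{ssid} pointwise nonnegative, check out.
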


In view of constructing nontrivial plurisubharmonic functions on the \tei spaces of \ke manifolds by using energy of harmonic maps, the Bochner formula implies that the only interesting case is that when each \ke manifold is of general type.

Let $\lb X_0,\omega_0\rb$ be a \ke manifold of general type, and $\phi_1,\cdots,\phi_m\in \h^{0,1}\lb X_0, T^{1,0}X_0\rb$ be a basis of harmonic Beltrami differentials. We consider the power series $\phi(t)$ as in equation \eqref{power10} which is the solution of the Kuranishi equation \eqref{kueq}. In this section, we give a formal discussion of the plurisubharmonicity of the energy of harmonic maps. The study of nonsmoothness of the Kuranishi space of $X_0$, the existence of smooth family of harmonic maps and the asymptotic behavior of the energy function will be discussed elsewhere since they are of independent interests. Thus we assume the deformation of the complex structure on $X_0$ is unobstructed. Let $\lb \XX, B,\pi\rb$ be the Kuranishi family of $X_0$ as constructed in Section \ref{kudiv}. It was shown in \cite{sundeform1} that, in this case, the Kuranishi gauge is equivalent to the divergence gauge. In particular, we have
\begin{eqnarray}\label{gycom}
\phi(t)\lrcorner\omega_0=0.
\end{eqnarray}
To simplify the notation, we assume $m=1$. The general case follows from the same type of computations. The deformation of \ke metrics in this case was established in \cite{sundeform1}. We let $V_t$ and $\omega_t$ be the volume form and the \ka form of the \ke metric on $X_t$, respectively. Then
\begin{align}\label{gyexp10}
\begin{split}
dV_t=& \lb 1+|t|^2\Delta_0(1-\Delta_0)^{-1} \lb |\phi_1|^2\rb+O \lb |t|^3\rb\rb dV_0,\\
\omega_t=& \omega_0+|t|^2 \lb \frac{\sqrt{-1}}{2}\pa_0\bar\pa_0 \lb (1-\Delta_0)^{-1}|\phi_1|^2\rb\rb+O\lb |t|^3\rb.
\end{split}
\end{align}

Now we let $\lb N,h\rb$ be a Riemannian manifold of nonpositive sectional curvature, $A$ be a homotopy class of maps from $X_0$ to $N$, and $F:\XX\to N$ be a smooth map such that each $f_t=F\mid_{X_t}:X_t\to N$ is a harmonic map in the class $A$. We note that the energy function $E\lb t,\bar t\rb=E\lb f_t\rb$ is independent of the choice of $F$ and is a function on $B$. 

\begin{theorem}\label{pshenergy}
The first variation of $E$ is given by
\begin{eqnarray}\label{1stv}
\frac{\pa E}{\pa t}\bigg |_{t=0}=-\int_{X_0} \Lambda\lb \phi_1\lrcorner H(f_0)\rb dV_0
\end{eqnarray}
and the second variation of $E$ is given by
\begin{align}\label{2stv}
\begin{split}
\frac{\pa^2 E}{\pa t\pa\bar t}\bigg |_{t=0}=& -\int_{X_0}  R^N_{\ga\gb\gm\gd}\pa_i f_0^\ga\pa_{\bar j}f_0^\gm\pa_p f_0^\gb\pa_{\bar q}f_0^\gd
 g^{i\bar j}g^{p\bar q}K \ dV_0 + \int_{X_0}  \Vert \nabla^{1,0}\bar\pa f_0\Vert^2 K\ dV_0\\
& -2 \int_{X_0} g^{i\bar j} R^N_{\ga\gb\gm\gd}\pa_i f_0^\ga\pa_{\bar j}f_0^\gm u^\gb\bar u^{\gd} dV_0  +
2\int_{X_0}\Vert \nabla^{1,0}\bar u-\bar\phi_1\lrcorner\bar\pa f_0\Vert^2 dV_0,
\end{split}
\end{align}
where $u=\frac{\pa f_t}{\pa t}\bigg |_{t=0}\in \Gamma \lb f_0^*T^\C N\rb$ and $K=\lb 1-\Delta_0\rb^{-1}\lb |\phi_1|^2\rb$.

Furthermore, if we assume that the curvature of $\lb N,h\rb$ is Hermitian nonpositive then the second variation of $E$ can be expressed as
\begin{eqnarray}\label{2stvhnp}
\frac{\pa^2 E}{\pa t\pa\bar t}\bigg |_{t=0}=-2 \int_{X_0} g^{i\bar j} R^N_{\ga\gb\gm\gd}\pa_i f_0^\ga\pa_{\bar j}f_0^\gm u^\gb\bar u^{\gd} dV_0  +
2\int_{X_0}\Vert \nabla^{1,0}\bar u-\bar\phi_1\lrcorner\bar\pa f_0\Vert^2 dV_0.
\end{eqnarray}
In particular, in this case, the energy function $E$ is plurisubharmonic on $B$.
\end{theorem}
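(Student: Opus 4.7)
The plan is to differentiate the energy $E(t,\bar t) = \int_{X_t} |df_t|^2\,dV_{g_t}$ along the Kuranishi family and reorganise the output in terms of geometric data on $(X_0,\omega_0)$. The essential ingredients are the deformation expansions \eqref{gyexp10} for $\omega_t$ and $V_t$, the Kuranishi--divergence gauge $\phi(t)\lrcorner\omega_0 = 0$ from Theorem \ref{equivmain}, the harmonic map equation \eqref{harmp10}, and the Siu--Sampson vanishing (Theorem \ref{ssva}). I would fix the underlying smooth manifold $X$ and encode all $t$-dependence through the Beltrami differential $\phi(t)$ and the smooth family $\{\omega_t\}$ of \ke forms.

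For the first variation I will differentiate at $t=0$ and split the derivative into (i) the variation coming from moving the map, captured by $u = \partial f_t/\partial t|_{t=0}$, and (ii) the variation coming from the deformation of $(J_t, g_t)$. Contribution (i) vanishes to first order because $f_0$ satisfies the Euler--Lagrange equation \eqref{harmp10}. For (ii), the expansions in \eqref{gyexp10} have no linear-in-$t$ terms, so only the first-order tilt of the $(1,0)/(0,1)$ decomposition, governed by $\phi_1$, contributes. Expanding $\partial_t f = \partial_0 f - \overline{\phi(t)}\lrcorner \bar\partial_0 f$ and $\bar\partial_t f = \bar\partial_0 f - \phi(t)\lrcorner \partial_0 f$ and invoking the gauge $\phi_1\lrcorner\omega_0 = 0$ reorganises the linear-in-$t$ term into the trace of $\phi_1$ against the Hopf differential $H(f_0)$, giving \eqref{1stv}.

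For the second variation I would differentiate once more in $\bar t$ and collect three groups of contributions. First, the pure base contribution at order $|t|^2$ extracts the factor $K = (1-\Delta_0)^{-1}|\phi_1|^2$ from \eqref{gyexp10}; after integration by parts and a use of \eqref{harmp10}, this reproduces the left-hand side of the Siu--Sampson identity \eqref{ssid} for $f_0$ multiplied by $K$, which gives the first two integrals in \eqref{2stv}. Second, the coupling of $\phi_1$ with the first-order map variation $u$ produces the curvature term $-2\int g^{i\bar j} R^N_{\alpha\bar\beta\gamma\bar\delta}\partial_i f_0^\alpha\partial_{\bar j} f_0^{\bar\gamma} u^\beta \bar u^{\bar\delta}\,dV_0$. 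Third, the purely $u$-dependent second-order piece, combined with the correction that arises because $\bar\partial_t$ itself varies with $t$, assembles after completing the square into $2\int\|\nabla^{1,0}\bar u - \bar\phi_1\lrcorner\bar\partial f_0\|^2\,dV_0$. The delicate step will be the bookkeeping of these cross terms, requiring repeated use of \eqref{harmp10}, integration by parts, and the gauge condition to discard divergence-type remainders.

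Finally, assuming the target curvature is Hermitian nonpositive, Theorem \ref{ssva} applied to $f_0$ forces $\nabla^{1,0}\bar\partial f_0 = 0$ together with pointwise vanishing of the full $(df,\overline{df},df,\overline{df})$ contraction of $R^N$, so the first two integrals of \eqref{2stv} vanish and \eqref{2stvhnp} emerges. The two remaining integrals are then manifestly non-negative: the first because Hermitian nonpositivity gives $-R^N_{\alpha\bar\beta\gamma\bar\delta}\partial_i f_0^\alpha \partial_{\bar j} f_0^{\bar\gamma} u^\beta \bar u^{\bar\delta} g^{i\bar j}\ge 0$ pointwise, and the second as a squared $L^2$ norm. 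Hence $\partial^2 E/\partial t\,\partial\bar t|_{t=0}\ge 0$; the same argument at an arbitrary $t\in B$, after re-centring the Kuranishi construction at $X_t$, yields plurisubharmonicity of $E$ throughout $B$. I expect the principal obstacle to be the second-variation bookkeeping, since the raw differentiation produces many terms whose reassembly into the clean form \eqref{2stv} depends critically on the harmonic map equation and on the Kuranishi--divergence gauge.
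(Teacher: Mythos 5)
Your proposal follows essentially the same route as the paper: differentiate the energy using the expansions \eqref{gyexp10} and the complex-structure description \eqref{cxuniv}, exploit the gauge condition $\phi(t)\lrcorner\omega_0=0$ and the harmonic map equation via integration by parts, recognize the order-$|t|^2$ base contribution as the Siu--Sampson identity weighted by $K$, and then invoke Theorem \ref{ssva} to kill the first two terms and conclude nonnegativity of the remaining ones. The organization into map-variation versus structure-variation contributions and the remark about re-centring at an arbitrary $t\in B$ are consistent with (and slightly more explicit than) the paper's argument.
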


\begin{proof}

Formulas \eqref{gyexp10} and \eqref{cxuniv} give us complete information about the operators $\pa_t$ and $\bar\pa_t$, as well as the \ke metric on $X_t$. Thus, by using formula \eqref{gycom} and the harmonic map equation \eqref{harmp10}, the first variation formula \eqref{1stv} follows from integration by parts. This also leads to the following expression of the second variation of $E$:
\begin{align}\label{2stv10}
\begin{split}
\frac{\pa^2 E}{\pa t\pa\bar t}\bigg |_{t=0}=& \int_{X_0} h_{\ga\gb}\pa_if_0^\ga\pa_{\bar j}f_0^\gb g^{i\bar j}\Delta_0 K\ dV_0 -
\int_{X_0} h_{\ga\gb}\pa_if_0^\ga\pa_{\bar j}f_0^\gb g^{i\bar q}g^{p\bar j}\pa_p\pa_{\bar q} K\ dV_0\\
& -2 \int_{X_0} g^{i\bar j} R^N_{\ga\gb\gm\gd}\pa_i f_0^\ga\pa_{\bar j}f_0^\gm u^\gb\bar u^{\gd} dV_0  +
2\int_{X_0}\Vert \nabla^{1,0}\bar u-\bar\psi\lrcorner\bar\pa f_0\Vert^2 dV_0.
\end{split}
\end{align}
Formula \eqref{2stv} now follows from \ref{2stv10} by integration by parts. Furthermore, if we assume the curvature of $N$ is Hermitian nonpositive then, by the Siu-Sampson vanishing Theorem \ref{ssva}, the first two terms on the right hand side of the second variation formula \eqref{2stv} vanish, thus we have formula \eqref{2stvhnp}. The plurisubharmonicity of $E$ now follows immediately from the Hermitian nonpositivity of the curvature of $N$.
\end{proof}

\medskip


\end{document}